\newtheorem{theorem}{Theorem}[section]
\newtheorem{lemma}[theorem]{Lemma}
\newtheorem{proposition}[theorem]{Proposition}
\newtheorem{question}[theorem]{Question}
\newtheorem{remark}[theorem]{Remark}
\theoremstyle{definition}
\numberwithin{equation}{section}
\newcommand{\Z}{\mathbb Z}
\newcommand{\N}{\mathbb N}
\begin{document}

\title{Minimal subshifts of arbitrary mean topological dimension}

\author [Dou Dou]{Dou Dou}

\address{Department of Mathematics, Nanjing University,
Nanjing, Jiangsu, 210093, P.R. China} \email{doumath@163.com}

\subjclass[2010]{Primary: 37B40, 28D20, 54H20}
\thanks{}

\keywords {amenable group, mean topological dimension, minimal dynamical system, subshift, tiling}

\begin{abstract}
Let $G$ be a countable infinite amenable group and $P$ be a polyhedron.
We give a construction of minimal subshifts of $P^G$ with arbitrarily mean topological dimension less than $\dim P$.
\end{abstract}

\maketitle

\section{Introduction}
In 1999, Gromov \cite{G} introduced a topological invariant of dynamical systems which is called mean dimension or mean topological dimension.
A series of research \cite{Gu,Gu2,Gu3,GLT,GM,L,LT,LW} show that mean dimension is naturally connected to the embedding problems via $(([0,1]^d)^{\Z},\text{shift})$ and $(([0,1]^d)^{\Z^k},\text{shift})$.
For example, using mean dimension, Lindenstrauss and Weiss \cite{LW} constructed a minimal system with mean dimension greater than $1$ and gave a negative answer
to the long-standing open question that whether every minimal system can be embedded into $([0,1]^{\Z},\text{shift})$ (see Auslander \cite{A}).
They also studied the basic theory of mean dimension and explained that their results can be generalized to actions of countable infinite amenable
groups.

In the case of actions of countable infinite amenable groups, it is natural to ask the following question:

What are the possible values for the mean topological dimension? How about the minimal case?

Lindenstrauss and Weiss \cite{LW}
assert that (without detail proof) for the $\Z$-action case, the values for the mean topological dimension of minimal systems can take over $[0,+\infty]$.
Although people believe that using Ornstein and Weiss's quasi-tiling theory \cite{OW}, similar construction and result can be extend to actions of countable infinite amenable groups,
to our knowledge, there are only partial answers to these questions.

When the countable infinite amenable group, denoted by $G$, has subgroups of arbitrarily large finite index, given a polyhedron $P$ and
a nonnegative real number $\rho$ which is no more than the topological dimension of $P$, Coornaert and Krieger \cite{CK} (see also \cite{C}) constructed a closed
subshift $X\subset P^G$ having mean topological dimension $\rho$. Hence in this situation, the mean topological dimension of $G$-systems can take all values in $[0,+\infty]$.
We remark here that this case includes all residually finite countable infinite amenable groups. When the countable infinite amenable group $G$ is residually finite, for any $\varepsilon>0$, Krieger \cite{K1}
constructed minimal systems with mean topological dimension $\varepsilon$-close to the topological dimension of $P$.
A later work by Krieger \cite{K2} improved $G$ to general countable infinite amenable groups.

In this paper, we will provide detail constructions of minimal subshifts of $P^G$ with arbitrary mean topological dimension less than $\dim P$ without any other restrictions
to the countable infinite amenable group $G$. Hence for any countable infinite amenable group $G$, the mean topological dimension of minimal $G$-systems takes all values in $[0,+\infty]$.

We will apply the classical construction of Toeplitz minimal flows for $\Z$-actions (see \cite{WS}). The key idea of that is to find some suitable ``periodic skeleton".
Since we add no restrictions to the amenable group $G$, the proper periodic structures in $G$ will be replaced by some ``almost periodic'' structures. This kind of ``almost periodic'' structures can be determined from the quasi-tiling techniques. To handle these ``almost periodic'' structures clearly, we will employ a recent result due to Downarowicz etc. \cite{DHZ}
which says that every countable infinite amenable group admits a congruent sequence of finite tilings. By showing the existence of some specific sequence of finite tilings, which will be called irreducible tilings in section 3, we can obtain the required ``almost periodic'' structures.

The paper is organized as follows. In section 2, we introduce some basic knowledge on the amenable group, subshift and mean topological dimension. Section 3 is devoted to discussing
the irreducible tilings of amenable groups and giving preparations for our constructions. In section 4, we will show our detail constructions of minimal subshifts with arbitrary mean topological dimension.

\section{Preliminaries}
\subsection{Amenable groups}
Recall that a group $G$ is {\it amenable} if there exists a sequence of nonempty
finite subsets $\{F_n\}$ of $G$ which are asymptotically invariant, i.e.,
$$\lim_{n\rightarrow+\infty}\frac{|F_n\vartriangle gF_n|}{|F_n|}=0, \text{ for all } g\in G.$$
Such sequences are called F{\o}lner sequences. For the detail of amenable group actions, one may refer to Ornstein and Weiss \cite{OW}.

Let $F(G)$ denote the collection of nonempty finite subsets of $G$ and $A,K\subset F(G)$. The {\it $K$-boundary} of $A$ is defined by
$$B(A,K)=\{g\in G: Kg\cap A\neq\emptyset \text { and } Kg\cap(G\setminus A)\neq\emptyset\}.$$
For $\delta>0$, the set $A$ is said to be {\it $(K, \delta)$-invariant} if
\begin{align*}
  \frac{|B(A,K)|}{|A|}<\delta.
\end{align*}
\subsection{$G$-systems and subshifts}
A $G$-system is a couple $(X,G)$ where $G$ is a group acting continuously on a compact metrizable space $X$. A $G$-system $(X,G)$ is said to be
{\it minimal} if for any $x\in X$, its orbit $\{gx: g\in G\}$ is dense in $X$.

Recall that a subset $S\subset G$ is called {\it syndetic} if there exists a finite subset $F\subset G$ such that $G=FS$.
A point $x\in X$ is said to be a {\it minimal point} ( or an {\it almost periodic point}) if for any neighbourhood $V$ of $x$, the set $\{g\in G: gx\in V\}$ is syndetic.
It is known that $(X,G)$ is minimal if and only if $X$ is the orbit closure of a minimal point.

Let $K$ be a compact metrizable space. The full $G$-shift with space of symbols $K$ is
the $G$-system given by the left action of $G$ on the product space $K^G=\{(x_g)_{g\in G}: x_g\in K\}$
defined by
$$g'(x_g)_{g\in G}=(x_{gg'})_{g\in G},$$
for all $g'\in G$ and $(x_g)_{g\in G}\in K^G$. A closed $G$-invariant subset
of $K^G$ is called a {\it subshift}. For a point $x=(x_g)_{g\in G}\in K^G$ and $F\subset G$, we denote by $x|_F=(x_g)_{g\in F}$,
the restriction of $x$ on $F$.

Let $F\subset G$ and $c\in G$. For $u=(u_g)_{g\in F}\in K^F$ and $v=(v_g)_{g\in Fc}\in K^{Fc}$, if $u=cv$, i.e. $u_g=v_{gc}$ for all $g\in F$, we sometime denote it by $u=v$ when there were no ambiguity. We also call an element in $K^F$ or $K^G$ a $K$-word or a word.

\subsection{Mean topological dimension}
Let $X$ be a compact metrizable  space and $\alpha=\{U_1,U_2,\ldots, U_k\}$ be a finite open cover of $X$. The {\it order}
of $\alpha$ is defined by
$${\rm ord}(\alpha)=\max_{x\in X}\sum_{i=1}^k1_{U_i}(x)-1.$$
Denote by $${\rm D}(\alpha)=\min_{\beta}{\rm ord}(\beta),$$
where $\beta$ is taken over all finite open cover of $X$ with $\beta\succ\alpha$.

The {\it topological dimension} of $X$ is then defined by
$$\dim X=\sup_{\alpha} {\rm D}(\alpha),$$
where $\alpha$ runs over all finite open cover of $X$.

Let $(X,G)$ be a $G$-system, where $G$ is an amenable group. For $F\in F(G)$ and a finite open cover $\alpha$ of $X$, denote by $\alpha_F=\bigvee_{g\in F}g^{-1}\alpha$.
Then we can define
$${\rm D}(\alpha, G)=\lim_{n\rightarrow \infty}\frac{{\rm D}(\alpha_{F_n})}{|F_n|},$$
where $\{F_n\}$ is a F{\o}lner sequence of $G$. It is known that this limit exists and is independent on the choice of the F{\o}lner sequence.
The {\it mean topological dimension} ${\rm mdim}(X,G)$ of $(X,G)$ is defined by
$${\rm mdim}(X,G)=\sup_{\alpha} {\rm D}(\alpha,G),$$
where $\alpha$ runs over all finite open cover of $X$.

For $J\subset G$, the density $\delta(J)\subset[0,1]$ of $J$ in $G$ is defined by
$$\delta(J)=\sup_{\{F_n\}}\limsup_{n\rightarrow\infty}\frac{|J\cap F_n|}{|F_n|},$$
where $\{F_n\}$ is taken over all F{\o}lner sequences of $G$.

The following is Proposition 2.8 of \cite{K2} which will be used for estimating the lower bound of the mean topological dimension.
We note here that this proposition is originally due to Lindenstraus and Weiss (\cite{LW}, Proposition 3.3) for $\Z$-actions.
\begin{proposition}\label{lowerbound}
  Let $G$ be a countable amenable group. Let $X\subset P^G$ be a closed subshift, where $P$ is a polyhedron.
  For a subset $E$ of $G$, we denote by $\pi_E$ the projection map $P^G\rightarrow P^E$. Suppose that there exist
  $x'=(x'_g)_{g\in G}\in X$ and a subset $J\subset G$ satisfying the following condition
  $$\pi_{G\setminus J}(x)=\pi_{G\setminus J}(x')\Rightarrow x\in X,$$
  for all $x\in P^G$.

  Then we have
  $${\rm mdim}(X,G)\ge \delta(J)\dim(P).$$
\end{proposition}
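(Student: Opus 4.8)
The plan is to bound ${\rm mdim}(X,G)$ from below by exhibiting a single well-chosen finite open cover $\alpha$ of $X$ and estimating ${\rm D}(\alpha,G)$, using that ${\rm mdim}(X,G)=\sup_\alpha {\rm D}(\alpha,G)$. Writing $d=\dim P$, I first fix a finite open cover $\gamma$ of $P$ built from an essential map $P\to[0,1]^{d}$, chosen so that for every $m$ the $m$-fold product cover on a product of $m$ copies of $P$ detects the full dimension, i.e. ${\rm D}(\gamma^{\vee m})\ge m\,d$ (this is where genuine dimension theory enters; see the last paragraph). Let $\pi_{g}\colon P^{G}\to P$ be the coordinate projection at $g$ and set $\alpha=\pi_{e}^{-1}(\gamma)$ restricted to $X$, where $e$ denotes the identity of $G$. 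A direct computation with the shift action gives $g'^{-1}\pi_{e}^{-1}(\gamma)=\pi_{g'}^{-1}(\gamma)$, so for $F\in F(G)$ the cover $\alpha_{F}=\bigvee_{g\in F}\pi_{g}^{-1}(\gamma)$ (restricted to $X$) has as members the sets $\bigcap_{g\in F}\pi_{g}^{-1}(V_{g})\cap X$ with $V_{g}\in\gamma$.

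The key geometric step is to locate inside $X$ a genuine copy of a cube. Fix a F{\o}lner sequence $\{F_{n}\}$ and put $F_{n}'=F_{n}\cap J$. Using the hypothesis together with the distinguished point $x'$, the set
$$W_{n}=\{x\in P^{G}:x|_{G\setminus F_{n}'}=x'|_{G\setminus F_{n}'}\}$$
is contained in $X$: since $F_{n}'\subset J$ we have $G\setminus F_{n}'\supseteq G\setminus J$, so any $x$ agreeing with $x'$ off $F_{n}'$ agrees with $x'$ off $J$ and hence lies in $X$. The restriction $\pi_{F_{n}'}$ maps $W_{n}$ homeomorphically onto a product of $|F_{n}'|$ copies of $P$, and under this homeomorphism the trace $\alpha_{F_{n}}|_{W_{n}}$ is exactly the product cover $\gamma^{\vee F_{n}'}$ (the factors indexed by $g\in F_{n}\setminus F_{n}'$ are constant on $W_{n}$ and drop out). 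Since $W_{n}\subset X$, and restricting a cover and all its refinements to a subspace cannot raise the order, ${\rm D}$ is monotone under passage to a subspace, whence
$${\rm D}(\alpha_{F_{n}})\ge {\rm D}(\alpha_{F_{n}}|_{W_{n}})={\rm D}(\gamma^{\vee F_{n}'})\ge d\,|F_{n}\cap J|.$$

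Dividing by $|F_{n}|$ and letting $n\to\infty$, and recalling that ${\rm D}(\alpha,G)$ is the (F{\o}lner-independent) limit of ${\rm D}(\alpha_{F_{n}})/|F_{n}|$, I obtain ${\rm D}(\alpha,G)\ge d\,\limsup_{n}|F_{n}\cap J|/|F_{n}|$ for every F{\o}lner sequence. Taking the supremum over F{\o}lner sequences as in the definition of $\delta(J)$ yields ${\rm mdim}(X,G)\ge {\rm D}(\alpha,G)\ge\delta(J)\,d=\delta(J)\dim P$, as required.

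I expect the main obstacle to be the purely dimension-theoretic input ${\rm D}(\gamma^{\vee m})\ge m\dim P$, i.e. that one product cover of a product of $m$ copies of $P$ already sees the full covering dimension $m\dim P$. This is where one invokes a product/essentiality form of the Lebesgue covering theorem: build $\gamma$ from an essential map $P\to[0,1]^{d}$, observe that the $m$-fold product map into $[0,1]^{md}$ is again essential, and use that the standard cover of the cube $[0,1]^{md}$ realizes its order as its dimension $md$. Everything else --- the identification of $W_{n}$ with a cube, the monotonicity of ${\rm D}$ under subspaces, and the density bookkeeping --- is routine once this fact is in hand.
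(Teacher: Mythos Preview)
The paper does not prove this proposition at all: it quotes it verbatim as Proposition~2.8 of Krieger \cite{K2} (itself the amenable extension of Lindenstrauss--Weiss \cite{LW}, Proposition~3.3), so there is no in-paper argument to compare with. Your outline is correct and is in fact the standard proof from those references: one pulls back a cover $\gamma$ of $P$ through the $e$-coordinate, embeds the cube $P^{F_n\cap J}$ into $X$ as the slice $W_n$ through $x'$, uses monotonicity of ${\rm D}$ under restriction to a closed subspace, and then invokes the product/essentiality fact ${\rm D}(\gamma^{\vee m})\ge m\dim P$ for polyhedra. Your identification of this last fact as the only nontrivial input, and your sketch via an essential map $P\to[0,1]^d$ and the essentiality of the product map $P^m\to[0,1]^{md}$, is exactly how Lindenstrauss--Weiss handle it; everything else is bookkeeping, and your handling of the density $\delta(J)$ via the F{\o}lner-independent limit ${\rm D}(\alpha,G)$ and the supremum over F{\o}lner sequences is fine.
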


To estimate the upper bound of the mean topological dimension, we will employ Gromov's ``Pro-Mean Inequality" (\cite{G}, Proposition 1.9.1).
Here we use the following statement by Coornaert and Krieger (\cite{CK}, Proposition 4.1).

\begin{proposition}\label{upperbound}
  Let $K$ be a compact metrizable space of finite topological dimension. Let $X\subset K^G$ be a closed subshift. Then one has
  $${\rm mdim}(X,G)\le \liminf_{n\rightarrow\infty}\frac{\dim(\pi_{F_n}(X))}{|F_n|},$$
  for every F{\o}lner sequence $\{F_n\}$ of $G$, where for a subset $E$ of $G$, $\pi_E$ is the projection map $K^G\rightarrow K^E$.
\end{proposition}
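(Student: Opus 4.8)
The plan is to bound $\mathrm{D}(\alpha,G)$ for an arbitrary finite open cover $\alpha$ of $X$ by the right-hand side and then take the supremum over $\alpha$; since that right-hand side does not involve $\alpha$, this will give the proposition at once. First I would record two elementary facts about the functional $\mathrm{D}$: it is monotone under refinement, i.e. $\mathrm{D}(\alpha)\le\mathrm{D}(\gamma)$ whenever $\gamma\succ\alpha$, and it satisfies $\mathrm{D}(\alpha)\le\mathrm{D}(\beta)$ when $\alpha$ is a pull-back cover. The key tool is this \emph{factorization lemma}: if $f\colon X\to Y$ is continuous and $\alpha=f^{-1}\beta$ for a finite open cover $\beta$ of $Y$, then $\mathrm{D}(\alpha)\le\dim Y$. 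To see this I would take a refinement $\gamma\succ\beta$ realizing $\mathrm{ord}(\gamma)=\mathrm{D}(\beta)$; then $f^{-1}\gamma$ refines $f^{-1}\beta$ and $\mathrm{ord}(f^{-1}\gamma)\le\mathrm{ord}(\gamma)$, because the multiplicity of $f^{-1}\gamma$ at $x$ equals the multiplicity of $\gamma$ at $f(x)$. Hence $\mathrm{D}(\alpha)\le\mathrm{ord}(f^{-1}\gamma)\le\mathrm{D}(\beta)\le\dim Y$.

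Next I would reduce to cylinder covers. Since cylinder sets (those depending on finitely many coordinates) form a basis for the topology of $X\subset K^G$, compactness lets me refine any finite open cover $\alpha$ to one of the form $\alpha'=\pi_D^{-1}\beta$, where $D\subset G$ is finite and $\beta$ is a finite open cover of $\pi_D(X)$. By monotonicity, $\mathrm{D}(\alpha,G)\le\mathrm{D}(\alpha',G)$, so it suffices to prove the estimate for $\alpha=\pi_D^{-1}\beta$.

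For such $\alpha$ I would analyze $\alpha_{F_n}$. A direct computation with the shift action shows that $g^{-1}\alpha$ depends only on the coordinates in $Dg$, so $\alpha_{F_n}=\bigvee_{g\in F_n}g^{-1}\alpha$ is pulled back along $\pi_{DF_n}$, where $DF_n=\bigcup_{g\in F_n}Dg$ is finite. The factorization lemma then gives $\mathrm{D}(\alpha_{F_n})\le\dim\pi_{DF_n}(X)$. To peel off the thickening I would use subadditivity of covering dimension under projections, $\dim\pi_{A\cup B}(X)\le\dim\pi_A(X)+\dim\pi_B(X)$, which follows because $\pi_{A\cup B}(X)$ embeds into $\pi_A(X)\times\pi_B(X)$ and the product inequality $\dim(Y\times Z)\le\dim Y+\dim Z$ holds for these compact metrizable spaces, together with the crude bound $\dim\pi_S(X)\le|S|\dim K$. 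This yields
$$\dim\pi_{DF_n}(X)\le\dim\pi_{F_n}(X)+|DF_n\setminus F_n|\,\dim K.$$

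Finally I would divide by $|F_n|$ and let $n\to\infty$. Since $D$ is fixed and finite, asymptotic invariance gives $|DF_n\setminus F_n|/|F_n|\to0$, so the error term disappears and
$$\mathrm{D}(\alpha,G)=\lim_{n\to\infty}\frac{\mathrm{D}(\alpha_{F_n})}{|F_n|}\le\liminf_{n\to\infty}\frac{\dim\pi_{F_n}(X)}{|F_n|}.$$
Taking the supremum over $\alpha$ proves the claim, and it holds for the prescribed F\o{}lner sequence because $\mathrm{D}(\alpha,G)$ is independent of the chosen sequence. The step I expect to be the main obstacle is the dimension-theoretic input: justifying the product (subadditivity) inequality for covering dimension of the projections $\pi_S(X)$, and making sure the geometry of the thickening $DF_n$ is controlled so that the F\o{}lner estimate applies uniformly; the rest is formal manipulation of covers and limits.
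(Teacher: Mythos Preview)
The paper does not prove this proposition; it simply quotes it from Coornaert--Krieger (\cite{CK}, Proposition~4.1), tracing it back to Gromov's ``Pro-Mean Inequality''. So there is no in-paper proof to compare against.

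Your argument is correct and is essentially the standard proof one finds in those references: reduce to cylinder covers $\alpha=\pi_D^{-1}\beta$, observe that $\alpha_{F_n}$ factors through $\pi_{DF_n}$ so that $\mathrm{D}(\alpha_{F_n})\le\dim\pi_{DF_n}(X)$, and then absorb the boundary $DF_n\setminus F_n$ using the F{\o}lner condition. Two minor remarks. First, in your factorization lemma you wrote that the multiplicity of $f^{-1}\gamma$ at $x$ \emph{equals} that of $\gamma$ at $f(x)$; strictly speaking it is $\le$ (preimages of distinct members may coincide), but the needed inequality holds. Second, the product inequality $\dim(Y\times Z)\le\dim Y+\dim Z$ you invoke is indeed available here because $\pi_S(X)\subset K^{|S|}$ is compact metrizable of finite dimension (since $\dim K<\infty$ by hypothesis); you correctly flag this as the only nontrivial external input. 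The F{\o}lner estimate $|DF_n\setminus F_n|/|F_n|\to 0$ follows immediately from $DF_n\setminus F_n\subset\bigcup_{d\in D}(dF_n\setminus F_n)$ and the definition of a F{\o}lner sequence, so there is no hidden obstacle there.
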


\section{Irreducible tilings of amenable group}
Let $G$ be a group and $F(G)$ be the collection of nonempty finite subsets of $G$.
We call $\mathcal{T}\subset F(G)$ a {\it tiling} if $\mathcal{T}$ forms a partition of $G$. An element in a tiling $\mathcal{T}$ is called
a $\mathcal{T}$-tile or tile.
A tiling $\mathcal{T}$ is said to be finite if there exists a finite collection $\mathcal{S}=\mathcal{S}(\mathcal{T})=\{S_1, S_2,\cdots,S_k\}$ of $F(G)$,
which is called the shapes of $\mathcal{T}$, such that each element in $\mathcal{T}$ is a translation of some set in $\mathcal{S}$.
For convenience, we always assume that the shapes $\mathcal{S}$ has minimal cardinality, i.e. any set in $\mathcal{S}$ cannot
be a translation of others. Moreover, by some suitable translation, we may require that each set in $\mathcal{S}$ contains $e_G$.

Let $S$ be a shape of a finite tiling $\mathcal{T}$, the center of shape $S$ is the set $C(S)=\{c\in G: Sc\in \mathcal{T}\}$. For convenience,
we need $C(S)$ to be nonempty for each shape $S$.
We also require the centers $C(S)$'s satisfy that $Sc$'s are disjoint for $c\in C(S)$ and $S\in\mathcal{S}$.

Let $G$ be an infinite amenable group. A finite tiling $\mathcal{T}$ of $G$ is said to be {\it irreducible} if there exist $T\in F(G)$ and $\epsilon>0$ such that for any $(T,\epsilon)$-invariant finite
subset $F$ in $G$ and any shape $S$ of $\mathcal{T}$, there is a translation of $S$ in $\mathcal{T}$ which is contained in $F$.

For a tiling $\mathcal{T}$ with shapes $\mathcal{S}$, we can define a subshift $X_{\mathcal{T}}$ of $(\mathcal{S}\cup\{0\})^G$ by
$$X_{\mathcal{T}}=\overline{\cup_{g\in G}gx},$$
where $x=(x_g)_{g\in G}$ is defined by
\begin{align*}
  x_g=\begin{cases}
    S, \text{ if }g\in C(S),\\
    0, \text{ otherwise},
  \end{cases}
\end{align*}
i.e., $x$ is a transitive point of the subshift $X_{\mathcal{T}}$.

\begin{lemma}\label{irreducible}
Let $\mathcal{T}$ be a finite tiling of an infinite amenable group $G$ with shapes $\mathcal{S}$. Then the following are equivalent:
\begin{enumerate}
  \item $\mathcal{T}$ is irreducible;
  \item for any $y\in X_{\mathcal{T}}$ and any shape $S\in\mathcal{S}$, there exists $g\in G$ with $y_g=S$;
  \item for any shape $S\in\mathcal{S}$, the center of $S$ is syndetic.
\end{enumerate}
\end{lemma}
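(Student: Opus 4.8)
The plan is to prove the cyclic chain $(1)\Rightarrow(2)\Rightarrow(3)\Rightarrow(1)$, working throughout with the explicit transitive point $x$ generating $X_{\mathcal{T}}$ and using the two facts that $(g'x)_h=x_{hg'}$ and that $x_g=S$ iff $g\in C(S)$. Before the implications I would isolate two elementary observations that carry all the real content. First, boundaries are right-translation invariant: $B(Wc,T)=B(W,T)c$ for all $W\in F(G)$ and $c\in G$, so a right translate $Wc$ is $(T,\epsilon)$-invariant precisely when $W$ is. Second, if $e_G\in T$ and $F$ is $(T,\epsilon)$-invariant with $\epsilon\le 1$, then the $T$-interior $\mathrm{int}_T(F)=\{g\in G:Tg\subseteq F\}$ is nonempty; otherwise every $g\in F$ would satisfy both $Tg\cap F\neq\emptyset$ (as $g\in Tg\cap F$) and $Tg\cap(G\setminus F)\neq\emptyset$, forcing $F\subseteq B(F,T)$ and hence $|B(F,T)|/|F|\ge 1$, a contradiction.

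For $(1)\Rightarrow(2)$, let $T,\epsilon$ witness irreducibility and fix a shape $S$ and a point $y\in X_{\mathcal{T}}$. By amenability I would pick any $(T,\epsilon)$-invariant set $W$, and since $y$ lies in the orbit closure of $x$ there is $c\in G$ with $y_h=x_{hc}$ for all $h\in W$. By the first observation the translate $Wc$ is again $(T,\epsilon)$-invariant, so irreducibility produces a tile $Sc'\in\mathcal{T}$ with $Sc'\subseteq Wc$; since $e_G\in S$ this gives $c'\in Wc$, say $c'=hc$ with $h\in W$, and $c'\in C(S)$ yields $y_h=x_{hc}=x_{c'}=S$, as required.

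For $(2)\Rightarrow(3)$ I would argue by contraposition: if $C(S)$ is not syndetic then for every $W\in F(G)$ one has $W^{-1}C(S)\neq G$, so there is $g_W\notin W^{-1}C(S)$, i.e. $Wg_W\cap C(S)=\emptyset$, whence $(g_Wx)_h=x_{hg_W}\neq S$ for all $h\in W$. Choosing an exhausting sequence $W_n\uparrow G$ and passing to a convergent subsequence of $g_{W_n}x$ by compactness, the limit $y\in X_{\mathcal{T}}$ satisfies $y_h\neq S$ for every $h$, contradicting (2). For $(3)\Rightarrow(1)$, for each shape $S_i$ pick a finite $F_i$ with $G=F_iC(S_i)$ and set $T=\{e_G\}\cup\bigcup_i S_iF_i^{-1}$ and $\epsilon=1$. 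If $F$ is $(T,\epsilon)$-invariant, the second observation gives $g$ with $Tg\subseteq F$; then for each $i$ syndeticity provides $c=f^{-1}g\in C(S_i)$ with $f\in F_i$, and $S_ic=S_if^{-1}g\subseteq S_iF_i^{-1}g\subseteq Tg\subseteq F$, so the tile $S_ic\in\mathcal{T}$ of shape $S_i$ lies inside $F$, establishing irreducibility.

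The main obstacle, and the place where both ``hard'' implications $(1)\Rightarrow(2)$ and $(3)\Rightarrow(1)$ really turn, is the passage between the analytic invariance condition and the combinatorial requirement that an \emph{entire} tile sit inside a given set. The two preliminary observations are tailored precisely for this: right-translation invariance of the boundary lets me push a fixed approximating translate of $y$ back into the regime where irreducibility applies, while nonemptiness of the $T$-interior, combined with syndeticity of the centers, lets me drop a whole tile into the deep interior of any sufficiently invariant set. By contrast $(2)\Rightarrow(3)$ is a routine compactness-plus-exhaustion argument, so I would expect essentially all the care to lie in verifying these interior and boundary estimates and in keeping the left/right conventions of the shift action consistent.
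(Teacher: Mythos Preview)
Your proof is correct and follows essentially the same cyclic scheme $(1)\Rightarrow(2)\Rightarrow(3)\Rightarrow(1)$ as the paper, with the same key mechanisms: right-translation invariance of the $K$-boundary for $(1)\Rightarrow(2)$, a compactness/exhaustion contraposition for $(2)\Rightarrow(3)$, and a nonempty-$T$-interior argument combined with syndeticity for $(3)\Rightarrow(1)$. Your $(3)\Rightarrow(1)$ is packaged a bit more cleanly than the paper's (you take $T=\{e_G\}\cup\bigcup_i S_iF_i^{-1}$ handling all shapes at once, whereas the paper works shape by shape with $T=RSS^{-1}R^{-1}$), but the underlying idea is identical.
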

\begin{proof}
$(1)\Rightarrow(2)$. Let $T\in F(G)$ and $\epsilon>0$ such that whenever $F\in F(G)$ is $(T,\epsilon)$-invariant
and $S\in\mathcal{S}$, we can find a tile with shape $S$ contained in $F$. Let $F$ be any $(T,\epsilon)$-invariant finite subset of $G$.
Suppose $y=\lim_{n\rightarrow\infty}g_nx$. Then there exists $n$ such that $y|_F=(g_nx)|_F=x|_{Fg_n}$. Since $Fg_n$ is also $(T,\epsilon)$-invariant,
we can find for each $S\in \mathcal{S}$ an element $c\in C(S)$ such that $Sc\subset Fg_n$. In particular, $c\in Fg_n$. Hence $y_{cg_n^{-1}}=x_c=S$.

$(2)\Rightarrow(3)$. Let $\{F_n\}$ be a F{\o}lner sequence of $G$ satisfying that $\{e_{G}\}\subset F_1\subset F_2\subset \cdots$ and $\cup_{n=1}^\infty F_n=G$.
Let $F_n'=F_nF_n^{-1}$ for each $n$. If for some $S\in \mathcal{S}$, $C(S)$ is not syndetic, then for each $n$ there exists $g_n\in G$ such that $g_n\notin F_n'C(S)$.
Hence $c\notin F_n'g_n$ for any $c\in C(S)$ and then $S$ does not occur in $(g_n^{-1}x)|_{F_n'}$. Let $y$ be a limit point of the sequence $(g_n^{-1}x)_{n\ge 1}$.
It is easy to see that $S$ does not occur in $y$, which contradicts to (2).

$(3)\Rightarrow(1)$. If $C(S)$ is syndetic for $S\in \mathcal{S}$, then there exists $T\in F(G)$ such that $RC(S)=G$, where $R=T\cup \{e_G\}$. Since $e_{G}\in S$,
we have that $RSC(S)=G$. Now let $0<\epsilon<1$ and let $F\in F(G)$ be any $(RSS^{-1}R^{-1},\epsilon)$-invariant subset of $G$. We can find some $g\in F$ such that
$RSS^{-1}R^{-1}g\subset F$. Since $RSC(S)=G$, we may assume $g=rsc$ for some $r\in R, s\in S$ and $c\in C(S)$. Hence
$$Sc\subset RSc\subset RSS^{-1}R^{-1}rsc=RSS^{-1}R^{-1}g\subset F.$$
This implies that $\mathcal{T}$ is irreducible.
\end{proof}

\begin{lemma}
  Let $\mathcal{T}$ be a finite tiling of an infinite amenable group $G$. If $\mathcal{T}$ is irreducible, then for any shape $S\in \mathcal{S}$ and any $n\in\N$,
  there exists $T\in F(G)$ and $\epsilon>0$ such that whenever $F\in F(G)$ is $(T,\epsilon)$-invariant, $F$ contains at least $n$ elements in $\mathcal{T}$ with
  shape $S$.
\end{lemma}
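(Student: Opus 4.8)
The plan is to upgrade the single-tile conclusion of irreducibility to a multiplicity statement by a density count, using the equivalence established in Lemma~\ref{irreducible}. First I would fix the shape $S$ and invoke Lemma~\ref{irreducible}(3) to get that the center $C(S)$ is syndetic: there is a finite set $R\in F(G)$, which I may take to contain $e_G$, with $RC(S)=G$. Write $r=|R|$. The goal is then to show that for every sufficiently invariant $F$ the number $N_S(F)=|\{c\in C(S):Sc\subseteq F\}|$ of shape-$S$ tiles lying inside $F$ is at least a fixed positive fraction of $|F|$, so that forcing $|F|$ to be large yields $N_S(F)\ge n$.

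The counting step has two halves. For the lower bound, from $RC(S)=G$ every $g\in F$ can be written $g=\rho c$ with $\rho\in R$ and $c\in C(S)$, so $F$ is covered by the $r$-element blocks $Rc$ with $c\in C(S)\cap R^{-1}F$; counting elements gives $|C(S)\cap R^{-1}F|\ge |F|/r$. For the boundary control I would observe that a center $c\in C(S)\cap R^{-1}F$ whose tile fails to lie in $F$ must satisfy both $Rc\cap F\neq\emptyset$ and $Sc\cap(G\setminus F)\neq\emptyset$; setting $K=R\cup S$, this says precisely that $c\in B(F,K)$. Since $e_G\in S$, every $c$ counted by $N_S(F)$ already lies in $R^{-1}F$, so discarding the bad centers yields $N_S(F)\ge |F|/r-|B(F,R\cup S)|$.

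It remains to choose the invariance parameters. Monotonicity of the boundary in its second argument (if $K'\subseteq K$ then $B(F,K')\subseteq B(F,K)$) lets me enlarge the window freely, so I would pick $T\in F(G)$ with $T\supseteq R\cup S$ and $|T|\ge 2rn$, together with $\epsilon<\min(1,1/(2r))$. For any $(T,\epsilon)$-invariant $F$ this delivers two things at once: $|B(F,R\cup S)|\le|B(F,T)|<\epsilon|F|<|F|/(2r)$, and, because a set of cardinality smaller than $|T|$ cannot contain any full translate $Tg$ and is therefore equal to its own $T$-boundary, $|F|\ge|T|\ge 2rn$. Combining the two estimates gives $N_S(F)\ge |F|/r-|F|/(2r)=|F|/(2r)\ge n$.

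The step I expect to require the most care is this last one: extracting the cardinality bound $|F|\ge|T|$ purely from invariance. The point is that if $|T|>|F|$ then no translate $Tg$ fits inside $F$, so $B(F,T)$ captures all of $T^{-1}F$ and $|B(F,T)|/|F|\ge 1$, contradicting $(T,\epsilon)$-invariance once $\epsilon\le 1$; making this compatible with the fraction $1/(2r)$ is exactly what forces the simultaneous choice of $T$ and $\epsilon$ above. Everything else reduces to the covering inequality and the identification of the misfit centers with a $K$-boundary, both of which are routine.
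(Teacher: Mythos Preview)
Your proof is correct but takes a genuinely different route from the paper's. The paper argues directly from the definition of irreducibility: it fixes a single $(T',\epsilon')$-invariant set $A$ guaranteed to contain one $S$-tile, lets $T=\bigcup_{i=1}^n Ag_i$ be a union of $n$ pairwise disjoint translates of $A$, and takes any $\epsilon<1$; then any $(T,\epsilon)$-invariant $F$ contains a full translate $Tg$, hence $n$ disjoint translates $Ag_ig$ of $A$, each of which is $(T',\epsilon')$-invariant and so contributes its own $S$-tile. Your argument instead passes through the syndetic characterization of Lemma~\ref{irreducible}(3) and does a density count, ultimately showing $N_S(F)\ge |F|/(2r)$. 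This is actually a stronger conclusion---a linear lower bound rather than a mere threshold---and is closer in spirit to how such counts are used later in the construction (where one needs many $S_{n,1}$-tiles inside each $\mathcal{T}_{l_n}'$-tile). The trade-off is that your argument has more bookkeeping (the boundary inclusion, the cardinality bound $|F|\ge|T|$), whereas the paper's disjoint-copies trick avoids all of that in two lines.
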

\begin{proof}
  Since $\mathcal{T}$ is irreducible, for any shape $S\in \mathcal{S}$, there exist $T'\in F(G)$ and $\epsilon'>0$ such that any $(T',\epsilon')$-invariant finite subset of $G$ contains an element in $\mathcal{T}$ with shape $S$. Let $A$ be such a $(T',\epsilon')$-invariant set and $Ag_1, Ag_2, \ldots, Ag_n$ be disjoint translations of $A$. Let $T=\cup_{i=1}^nAg_i$ and $0<\epsilon<1$. Then for any $(T,\epsilon)$-invariant finite subset $F$, $F$ contains at least one translation of $T$, say $Tg$. Hence $Ag_ig\subset F$ for $i=1,2,\ldots,n$.
  Since each $Ag_ig$ is $(T',\epsilon')$-invariant, it contains an element in $\mathcal{T}$ with shape $S$. Hence $F$ contains at least $n$ elements in $\mathcal{T}$ with
  shape $S$.
\end{proof}
\begin{lemma}\label{mini}
  Let $\mathcal{T}$ be a finite tiling of an infinite countable amenable group $G$ and $(X_{\mathcal{T}},G)$ be the associated subshift.
  If $(X_{\mathcal{T}},G)$ is minimal, then $\mathcal{T}$ is irreducible.
\end{lemma}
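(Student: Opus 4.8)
The plan is to prove the statement directly by verifying condition (2) of Lemma \ref{irreducible}, which is equivalent to irreducibility. That is, assuming $(X_{\mathcal{T}}, G)$ is minimal, I will show that for every $y \in X_{\mathcal{T}}$ and every shape $S \in \mathcal{S}$ there is some $g \in G$ with $y_g = S$. The mechanism is standard: in a minimal system every orbit is dense, so the orbit of any point must enter each nonempty open set, and the event ``the symbol $S$ occurs'' can be encoded as a cylinder set depending on a single coordinate.

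Fix a shape $S \in \mathcal{S}$ and consider the cylinder $V_S = \{ y \in X_{\mathcal{T}} : y_e = S \}$. Because a single coordinate is prescribed and the alphabet $\mathcal{S} \cup \{0\}$ is discrete, $V_S$ is open (indeed clopen) in $X_{\mathcal{T}}$. The first substantive step is to check that $V_S \neq \emptyset$. Here I use the standing assumption that each center $C(S)$ is nonempty: picking $c \in C(S)$, the definition of the transitive point $x$ gives $x_c = S$, and since $cx$ lies in the orbit of $x$, hence in $X_{\mathcal{T}}$, we compute $(cx)_e = x_{ec} = x_c = S$, so that $cx \in V_S$.

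With $V_S$ a nonempty open subset of $X_{\mathcal{T}}$, minimality applies: for an arbitrary $y \in X_{\mathcal{T}}$ the orbit $\{gy : g \in G\}$ is dense, so it meets $V_S$, i.e.\ there is $g \in G$ with $gy \in V_S$. Unwinding the action, $(gy)_e = y_{eg} = y_g$, whence $y_g = S$. As $y$ and $S$ were arbitrary, this is exactly condition (2) of Lemma \ref{irreducible}, and the equivalence proved there yields that $\mathcal{T}$ is irreducible.

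I do not expect a genuine obstacle in this argument: once the correct cylinder set is identified, the whole content is the elementary fact that dense orbits intersect nonempty open sets. The only points demanding care are bookkeeping ones, namely confirming that $V_S$ is open in the subspace topology, tracking the right-multiplication convention $(gy)_h = y_{hg}$ so that membership in $V_S$ translates into the occurrence $y_g = S$ rather than $y_{g^{-1}} = S$, and invoking the nonemptiness of the centers. The real work of the lemma has effectively been front-loaded into the equivalence of Lemma \ref{irreducible}, which converts the combinatorial notion of irreducibility into the symbolic-occurrence statement that minimality delivers for free.
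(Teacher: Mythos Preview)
Your proof is correct and follows essentially the same line as the paper's, which also reduces everything to Lemma~\ref{irreducible}. The only difference is cosmetic: the paper verifies condition~(3) there (noting that $C(S)=\{g\in G:x_g=S\}$ is syndetic because $x$ is a minimal point in the minimal system $X_{\mathcal{T}}$), whereas you verify condition~(2); both arguments rest on the same cylinder set $V_S$ and the standing assumption that each center is nonempty.
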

\begin{proof}
  If $(X_{\mathcal{T}},G)$ is minimal, then for each $S\in\mathcal{S}$, $C(S)=\{g\in G: x_g=S\}$ is syndetic. By Lemma \ref{irreducible},
   $\mathcal{T}$ is irreducible.
\end{proof}

Let $\mathcal{T}_1$ and $\mathcal{T}_2$ be two finite tilings of $G$. If every tile of $\mathcal{T}_{2}$
equals a union of tiles of $\mathcal{T}_1$, i.e., as a partition of $G$, $\mathcal{T}_1$ is a refinement of $\mathcal{T}_{2}$ (denoted by $\mathcal{T}_{2}\preccurlyeq\mathcal{T}_1$), then we say $\mathcal{T}_2$ is {\it congruent} with $\mathcal{T}_1$.
If in addition, each shape of ${\mathcal{T}}_{2}$ is partitioned by shapes of ${\mathcal{T}}_{1}$ through a unique way (this kind of partition is called ``the master partition'' by Downarowicz etc \cite{DHZ}), then we say $\mathcal{T}_2$ is {\it primely congruent} with $\mathcal{T}_1$. A sequence of tilings $(\mathcal{T}_k)_{k\ge 1}$ is said to be (primely) congruent if for each $k\ge 1$, $\mathcal{T}_{k+1}$ is (primely) congruent with $\mathcal{T}_{k}$.

The following proposition appears in the proof of Theorem 5.2 of \cite{DHZ}.
\begin{proposition}\label{factor}
  Let $\mathcal{T}_1$ and $\mathcal{T}_2$ be two finite tilings of an infinite countable amenable group $G$ and $\mathcal{T}_2$ is primely congruent with $\mathcal{T}_1$.
  Then $(X_{\mathcal{T}_1},G)$ is a topological factor of $(X_{\mathcal{T}_2},G)$, where $(X_{\mathcal{T}_1},G)$ and $(X_{\mathcal{T}_2},G)$ are the subshifts associated with
  $\mathcal{T}_1$ and $\mathcal{T}_2$ respectively.
\end{proposition}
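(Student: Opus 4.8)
The plan is to construct an explicit factor map $\phi\colon X_{\mathcal{T}_2}\To X_{\mathcal{T}_1}$ by \emph{refining}, according to the master partition, the tiling encoded by each point of $X_{\mathcal{T}_2}$. Write $\mathcal{S}_i=\mathcal{S}(\mathcal{T}_i)$. Prime congruence attaches to every shape $S\in\mathcal{S}_2$ a \emph{fixed} finite list of pairs $P(S)=\{(S',d)\}$ with $S'\in\mathcal{S}_1$ and $d\in S$, such that $S=\bigsqcup_{(S',d)\in P(S)}S'd$ is the (unique) master partition. The point of using prime, rather than merely ordinary, congruence is exactly that this refinement rule depends only on the shape $S$ and not on the particular tile; this is what will make $\phi$ a sliding block code.

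First I would check that every $y\in X_{\mathcal{T}_2}$ encodes a genuine tiling of $G$, i.e.\ that the translated shapes $\{Sc : y_c=S\}$ partition $G$. For a fixed $g\in G$, both the event ``$g$ lies in at least one such tile'' and the event ``$g$ lies in at most one'' depend only on the finitely many coordinates $y_c$ with $c\in\bigcup_{S\in\mathcal{S}_2}S^{-1}g$, and are therefore clopen in $(\mathcal{S}_2\cup\{0\})^G$. Since these conditions hold at the transitive point $x^{(2)}$ (because $\mathcal{T}_2$ partitions $G$) and are $G$-invariant, they persist on the closure $X_{\mathcal{T}_2}=\overline{Gx^{(2)}}$.

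With this in hand I would define $\phi$ coordinatewise: given $y$ and $g$, let $Sc$ be the unique $y$-tile containing $g$; refining it by $P(S)$ writes $Sc=\bigsqcup_{(S',d)\in P(S)}S'(dc)$, and $g$ lies in exactly one sub-tile $S'(dc)$. Set $\phi(y)_g=S'$ if $g=dc$ (the center of that sub-tile) and $\phi(y)_g=0$ otherwise. Because the value depends only on $y$ restricted to the fixed finite window $\bigcup_{S\in\mathcal{S}_2}S^{-1}g$, the map $\phi$ is continuous, and since the refinement rule is translation-covariant (right translation of a tile right-translates its master subpartition), $\phi$ intertwines the two shift actions.

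Finally I would identify the image. Applying $\phi$ to $x^{(2)}$: each $\mathcal{T}_2$-tile $Sc$ is, by congruence, a union of $\mathcal{T}_1$-tiles, which after translation by $c^{-1}$ give a partition of $S$ into $\mathcal{S}_1$-shapes; by the uniqueness in prime congruence this partition must coincide with the master partition $P(S)$, so the refinement of $\mathcal{T}_2$ is precisely $\mathcal{T}_1$ and $\phi(x^{(2)})=x^{(1)}$. Equivariance then gives $\phi(gx^{(2)})=gx^{(1)}$, so $\phi(Gx^{(2)})=Gx^{(1)}$; continuity and compactness yield $\phi(X_{\mathcal{T}_2})=\phi(\overline{Gx^{(2)}})=\overline{Gx^{(1)}}=X_{\mathcal{T}_1}$. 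Thus $\phi$ is a continuous, equivariant surjection, i.e.\ a factor map. I expect the main obstacle to be not the topology but the bookkeeping that forces the refinement to be local: one must use the uniqueness of the master partition to rule out that two tiles of the same shape are subdivided differently, which is precisely where ordinary congruence would fail and prime congruence is indispensable.
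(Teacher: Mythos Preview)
Your argument is correct and rests on the same key observation as the paper's: prime congruence makes the passage from a $\mathcal{T}_2$-tile to its $\mathcal{T}_1$-subdivision depend only on the shape, so the refinement is a local rule determined by the finite window $\bigcup_{S\in\mathcal{S}_2}S^{-1}g$. The paper packages this differently. Rather than first proving that every $y\in X_{\mathcal{T}_2}$ encodes a genuine tiling and then writing down a sliding block code, it defines $\pi$ only on the transitive point, $\pi(x^{(2)})=x^{(1)}$, and for a general $y=\lim_n g_n x^{(2)}$ shows directly that $\lim_n g_n x^{(1)}$ exists (because $x^{(1)}|_{Fg_n}$ is determined by $x^{(2)}|_{\tilde F g_n}$ with $\tilde F=\bigcup_{S\in\mathcal{S}_2}SS^{-1}F$), setting $\pi(y)$ equal to that limit. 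Your route is a bit more explicit and self-contained: you get continuity for free from the block-code description and you never have to argue that a limit is well defined. The paper's route avoids the preliminary ``every point encodes a tiling'' step but then has to check separately that $\pi$ is well defined and continuous. Both use prime congruence at exactly the same place and for exactly the same reason.
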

\begin{proof}
  For $j=1,2$, let $\mathcal{S}_j=\mathcal{S}(\mathcal{T}_j)$ be the shape set of $\mathcal{T}_j$. Let $x^{(j)}=(x^{(j)}_g)_{g\in G}$
  be the associated transitive point of $(X_{\mathcal{T}_j},G)$. We will define a map $\pi$ from $(X_{\mathcal{T}_2},G)$ to $(X_{\mathcal{T}_1},G)$ by the following way. For $x^{(2)}$, we let $\pi(x^{(2)})=x^{(1)}$. Let $y$ be a point in $X_{\mathcal{T}_2}$, assume $y=\lim_{n\rightarrow\infty}g_nx^{(2)}$. For any $F\in F(G)$, denote by $\tilde{F}=\cup_{S\in\mathcal{S}_2}SS^{-1}F$. Then there exists $N\in \N$ such that for any $n>N$ and any $g\in \tilde{F}$, $y_g=x^{(2)}_{gg_n}$. We note that for any $n>N$ and any $g\in F$, if $gg_n$ is contained in some tile $Sc$ of $\mathcal{T}_2$, then $Sc\subset \tilde{F}g_n$. Since $\mathcal{T}_2$ is primely congruent with $\mathcal{T}_1$, for any $g\in G$, if $g$ is contained in some tile $Sc$ of $\mathcal{T}_2$, then $x^{(1)}_g$ is completely determined by $x^{(2)}|_{Sc}$. Hence $x^{(1)}|_{Fg_n}$ is completely determined by $x^{(2)}|_{\tilde{F}g_n}$, which implies that $x^{(1)}|_{Fg_n}$'s are the same up to translations. Thus $\lim_{n\rightarrow\infty}g_nx^{(1)}$ exists and we then let $\pi(y)=\lim_{n\rightarrow\infty}g_nx^{(1)}$.
  A similar argument can show that $\pi$ is continuous. Moreover, for any $x\in X_{\mathcal{T}_2}$ and $g\in G$, $\pi(gx)=g\pi(x)$. Thus $\pi$ is a factor map and $(X_{\mathcal{T}_1},G)$ is a topological factor of $(X_{\mathcal{T}_2},G)$.
\end{proof}
Let $\mathcal{T}$ be a finite tiling of an infinite countable amenable group $G$. Denote by $h(\mathcal{T})=h_{top}(X_{\mathcal{T}},G)$, the topological entropy of the associated subshift $(X_{\mathcal{T}},G)$. The following is Theorem 5.2 of \cite{DHZ} by Downarowicz etc. \cite{DHZ}.
\begin{theorem}\label{tiling1}
Let $G$ be a infinite countable amenable group. Fix a converging to zero sequence $\epsilon_k>0$
and a sequence $K_k$ of finite subsets of $G$. There exists a congruent sequence of finite tilings $\overline{\mathcal{T}}_k$ of $G$ such that the shapes
of $\overline{\mathcal{T}}_k$ are $(K_k,\epsilon_k)$-invariant and $h(\overline{\mathcal{T}}_k)=0$ for each $k$.
\end{theorem}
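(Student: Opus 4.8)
The plan is to construct the sequence $(\overline{\mathcal{T}}_k)_{k\ge 1}$ by induction on $k$, building each tiling out of the previous one so that congruence comes for free, while using the Ornstein--Weiss quasi-tiling machinery to guarantee the invariance of the shapes at each level. First I would recall the Ornstein--Weiss quasi-tiling theorem in the form: for any finite $K\subset G$ and any $\epsilon>0$ there are finitely many $(K,\epsilon)$-invariant shapes with which $G$ can be covered, up to a remainder of density less than $\epsilon$, by pairwise disjoint translates. The first and central technical step is to upgrade such a quasi-tiling to an \emph{exact} finite tiling (one with no gaps) whose shape set is still finite and still arbitrarily invariant. I would attempt this by a multi-scale filling procedure: lay down highly invariant tiles at a very large scale covering all but an $\epsilon$-fraction, fill the sparse remainder by the same quasi-tiling applied at finitely many successively smaller but still $(K,\epsilon)$-invariant scales, and close the last gaps by a compactness argument along an exhausting F{\o}lner sequence. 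Producing exactness together with finiteness of the shape set is the crux of the whole theorem and the step I expect to be the main obstacle, precisely because the Ornstein--Weiss theorem intrinsically leaves gaps.

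Granting the exact-tiling step, the inductive construction runs as follows. Let $\overline{\mathcal{T}}_1$ be an exact finite tiling with $(K_1,\epsilon_1)$-invariant shapes. Assuming $\overline{\mathcal{T}}_k$ has been built, I would treat its tiles as indivisible blocks and tile $G$ at a much larger scale so that each new super-tile, after a thin boundary adjustment, is a finite union of $\overline{\mathcal{T}}_k$-tiles; this forces $\overline{\mathcal{T}}_k\preccurlyeq\overline{\mathcal{T}}_{k+1}$, i.e. congruence. Choosing the super-scale large relative to $K_{k+1}$ and $1/\epsilon_{k+1}$, and using that a bounded boundary correction to a $(K_{k+1},\epsilon_{k+1})$-invariant super-shape leaves it $(K_{k+1},\epsilon_{k+1})$-invariant, guarantees the shapes of $\overline{\mathcal{T}}_{k+1}$ have the required invariance; since the adjustment region is thin and the level-$k$ shapes are finite in number, only finitely many super-shapes arise. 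Applying the exact-tiling step at each level keeps every $\overline{\mathcal{T}}_k$ gap-free.

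Finally, for the entropy condition I would bound the complexity of each $X_{\overline{\mathcal{T}}_k}$ directly. A $\overline{\mathcal{T}}_k$-pattern on a F{\o}lner set $F_n$ is determined by the finitely many shapes together with the positions of the tile-centers meeting $F_n$, so it suffices to estimate the number of admissible center configurations. The congruent hierarchy makes these configurations rigid: the placement of the level-$k$ tiles is dictated by how $F_n$ sits inside the coarser super-tiles, so the number of $\overline{\mathcal{T}}_k$-patterns on $F_n$ grows subexponentially in $|F_n|$, whence $h(\overline{\mathcal{T}}_k)=h_{top}(X_{\overline{\mathcal{T}}_k},G)=0$. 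The delicate point here is to make the super-tilings rigid enough to force subexponential complexity while remaining flexible enough to realize the invariance demanded at every level; balancing these two requirements on top of exactness is what makes the construction subtle.
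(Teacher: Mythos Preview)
The paper does not prove this theorem at all: it is quoted verbatim as Theorem~5.2 of Downarowicz--Huczek--Zhang \cite{DHZ} and used as a black box. So there is no argument in the present paper to compare your proposal against; the only ``proof'' here is the citation.

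That said, your proposal as written has a genuine gap precisely at the step you yourself flag as the crux. You say you will upgrade an Ornstein--Weiss quasi-tiling to an \emph{exact} finite tiling by ``a multi-scale filling procedure'' and then ``close the last gaps by a compactness argument along an exhausting F{\o}lner sequence.'' This is not an argument: iterating quasi-tilings at smaller scales still leaves a remainder of positive (if tiny) density at every stage, and compactness does not produce for you a limiting object that is simultaneously a partition of $G$ and has a \emph{finite} shape set. The subtlety in \cite{DHZ} is exactly how to absorb the uncovered remainder into the already-placed tiles by a controlled local modification that creates only finitely many new shapes; your sketch does not indicate any mechanism for this, and without it the construction either fails exactness or produces infinitely many shapes. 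Similarly, your entropy argument relies on the ``congruent hierarchy making configurations rigid,'' but rigidity is something one must engineer in the construction (in \cite{DHZ} it comes from the deterministic nature of the absorption rule), not a free consequence of congruence alone. In short, your outline identifies the right difficulties but does not resolve them; to complete the proof you would need to consult \cite{DHZ} for the actual tiling construction.
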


\begin{remark}\label{uniform}
  In the proof of the above theorem by Downarowicz etc., the congruent sequence of finite tilings $\overline{\mathcal{T}}_k$ can be made
  primely congruent. Hence for any two tiles $Sc_1, Sc_2\in \overline{\mathcal{T}}_{k+1}$ with the same shape $S$, $\overline{\mathcal{T}}_k|_{Sc_1}=(\overline{\mathcal{T}}_k|_{Sc_2})c_2^{-1}c_1$.
\end{remark}

Theorem \ref{tiling1} can be strengthened to make each tiling irreducible.
\begin{theorem}\label{tiling2}
Let $G, (\epsilon_k)$ and $(K_k)$ be as in Theorem \ref{tiling1}. Then there exists a primely congruent sequence of finite irreducible tilings $\mathcal{T}_k$ of $G$
such that the shapes
of $\mathcal{T}_k$ are $(K_k,\epsilon_k)$-invariant and $h(\mathcal{T}_k)=0$ for each $k$.
\end{theorem}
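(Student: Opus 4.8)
The plan is to extract the desired sequence from a minimal subsystem of the inverse limit built on the tilings supplied by Theorem \ref{tiling1}. Fix the sequences $(\epsilon_k)$ and $(K_k)$. By Theorem \ref{tiling1} together with Remark \ref{uniform} there is a primely congruent sequence of finite tilings $\overline{\mathcal{T}}_k$ whose shapes are $(K_k,\epsilon_k)$-invariant and with $h(\overline{\mathcal{T}}_k)=0$ for every $k$. By Proposition \ref{factor}, prime congruence of $\overline{\mathcal{T}}_{k+1}$ with $\overline{\mathcal{T}}_k$ yields a factor map $\pi_k\colon X_{\overline{\mathcal{T}}_{k+1}}\to X_{\overline{\mathcal{T}}_k}$. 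I would form the inverse limit $X_\infty=\varprojlim(X_{\overline{\mathcal{T}}_k},\pi_k)$; since each $\pi_k$ is surjective, $X_\infty$ is a nonempty compact $G$-system, and by Zorn's lemma it contains a minimal subsystem $M$. Fix a point $\xi=(y^{(k)})_{k\ge1}\in M$, so that $\pi_k(y^{(k+1)})=y^{(k)}$ for all $k$.

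Each coordinate $y^{(k)}$ lies in $X_{\overline{\mathcal{T}}_k}$ and hence encodes a tiling of $G$ all of whose shapes are shapes of $\overline{\mathcal{T}}_k$; here I use that ``being a tiling with shapes in $\mathcal{S}(\overline{\mathcal{T}}_k)$'' is a closed shift-invariant condition, so it is inherited by every point of the orbit closure $X_{\overline{\mathcal{T}}_k}$. Let $\mathcal{T}_k$ denote this tiling, with shape set consisting of those shapes actually occurring in $y^{(k)}$; then $y^{(k)}$ is precisely the associated transitive point of $\mathcal{T}_k$, so $X_{\mathcal{T}_k}=\overline{Gy^{(k)}}$. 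Writing $p_k\colon X_\infty\to X_{\overline{\mathcal{T}}_k}$ for the canonical projection, $M_k:=p_k(M)$ is a minimal subsystem of $X_{\overline{\mathcal{T}}_k}$ containing $y^{(k)}$, whence $X_{\mathcal{T}_k}=\overline{Gy^{(k)}}=M_k$ is minimal, and Lemma \ref{mini} gives that $\mathcal{T}_k$ is irreducible. The two remaining requirements are immediate: the shapes of $\mathcal{T}_k$ are shapes of $\overline{\mathcal{T}}_k$, hence $(K_k,\epsilon_k)$-invariant; and $X_{\mathcal{T}_k}=M_k$ is a subsystem of $X_{\overline{\mathcal{T}}_k}$, so $h(\mathcal{T}_k)\le h(\overline{\mathcal{T}}_k)=0$.

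It remains to verify that $(\mathcal{T}_k)_{k\ge1}$ is primely congruent, and this is the step I expect to be the main obstacle. The point is to transfer the master-partition structure relating $\overline{\mathcal{T}}_{k+1}$ and $\overline{\mathcal{T}}_k$ from the transitive points to the limit points $y^{(k+1)},y^{(k)}$. Concretely, from the description of $\pi_k$ in the proof of Proposition \ref{factor}, for every tile $Sc$ of $\mathcal{T}_{k+1}$ the restriction of the tiling $y^{(k)}=\pi_k(y^{(k+1)})$ to $Sc$ is the master partition of $S$ translated by $c$, which by Remark \ref{uniform} is the same for every occurrence of $S$. I would then check that this forces each $\mathcal{T}_{k+1}$-tile to be an exact union of $\mathcal{T}_k$-tiles, with no fine tile crossing a coarse boundary, and that each coarse shape is subdivided through this single master partition; together these say exactly that $\mathcal{T}_{k+1}$ is primely congruent with $\mathcal{T}_k$. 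Since congruence and the ``no crossing'' property are local conditions satisfied by the transitive points, they persist on the whole orbit closure, and the argument closes.
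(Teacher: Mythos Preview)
Your proposal is correct and follows essentially the same route as the paper: start from the primely congruent sequence $(\overline{\mathcal{T}}_k)$ of Theorem \ref{tiling1}/Remark \ref{uniform}, form the inverse limit of the associated subshifts via Proposition \ref{factor}, pick a minimal point there, and let $\mathcal{T}_k$ be the tiling encoded by its $k$th coordinate, so that Lemma \ref{mini} and the inclusion $X_{\mathcal{T}_k}\subset X_{\overline{\mathcal{T}}_k}$ give irreducibility and zero entropy. Your sketch is in fact more explicit than the paper on the one point it leaves implicit, namely why the resulting $(\mathcal{T}_k)$ is again primely congruent: your observation that the factor map of Proposition \ref{factor} acts tile-by-tile via the master partition, and that this local rule persists under limits, is exactly what is needed.
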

\begin{proof}
  Let $(\overline{\mathcal{T}}_k)$ be the sequence of tilings in Theorem \ref{tiling1} and $(X_{\overline{\mathcal{T}}_k},G)$'s be the associated subshifts. Due to the proof of Theorem 5.2 of \cite{DHZ} by Downarowicz etc.,
  the sequence of tilings $(\overline{\mathcal{T}}_k)$ was chosen to be primely congruent and then there exists for each $k$ a factor map
  $$\pi_{k+1}:(X_{\overline{\mathcal{T}}_{k+1}},G)\rightarrow (X_{\overline{\mathcal{T}}_k},G).$$
  Let $(\tilde{X},G)$ be the inverse limit system of the systems $(X_{\overline{\mathcal{T}}_k},G)$'s and the factor maps $\pi_k$'s.
  Let $\tilde{x}$ be a minimal point of $(\tilde{X},G)$ and let $x_{k}=\tilde{\pi}_k(\tilde{x})$ for each $k$, where $\tilde{\pi}_k$ is the projection map from
  $(\tilde{X},G)$ to $(X_{\overline{\mathcal{T}}_k},G)$.
  Then we obtain a sequence $(x_k)_{k\ge 1}$ such that $x_k$ be a minimal point of $(X_{\overline{\mathcal{T}}_k},G)$ and $\pi_{k+1}(x_{k+1})=x_k$.
  For each $x_k$, we can get a finite tiling $\mathcal{T}_k$ whose shapes are contained in the shapes of $\overline{\mathcal{T}}_k$.
  Due to Lemma \ref{mini}, each $\mathcal{T}_k$ is irreducible.
  Moreover, since $X_{\mathcal{T}_k}\subset X_{\overline{\mathcal{T}}_k}$, $h(\mathcal{T}_k)=0$.
\end{proof}
\begin{theorem}\label{tiling3}
  Let $(\mathcal{T}_n)$ be the primely congruent sequence of finite irreducible tilings of $G$ as in Theorem \ref{tiling2} and let $\{S_{n,1},S_{n,2},\cdots, S_{n,k_n}\}$ be the shapes of $\mathcal{T}_n$ for each $n$.
  Then for each $n$, there exist $l_n\in\N$, $t_n\in G$ and a finite irreducible tiling $\mathcal{T}_{n}'$ such that
  \begin{enumerate}
    \item $e_G\in S_{l_1,1}t_1\subset S_{l_2,1}t_2\subset\cdots$ and $\cup_{n}S_{l_n,1}t_n=G$,
    \item the shapes of $\mathcal{T}_{n}'$ and $\mathcal{T}_{l_n}$ are the same,
    \item $(\mathcal{T}_{n}')$ is also a primely congruent sequence of finite irreducible tilings.
  \end{enumerate}
\end{theorem}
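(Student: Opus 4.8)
The plan is to carve out, inside the large shapes, a coherent nested ``flag'' of translated first shapes pointed at $e_G$, and then to rebuild each $\mathcal{T}_n'$ from that flag by gluing master partitions. Throughout I may assume $\bigcup_k K_k=G$ (this is the case of interest and can be arranged when invoking Theorem \ref{tiling2}), so that the shapes become arbitrarily invariant along any subsequence. First I record that prime congruence is transitive: whenever $l<l'$, Remark \ref{uniform} says the sub-partition induced on a tile depends only on its shape, so composing the one-step master partitions from level $l'$ down to level $l$ partitions each level-$l'$ shape into level-$l$ shapes in a way that again depends only on the shape; hence $\mathcal{T}_{l'}$ is primely congruent with $\mathcal{T}_l$. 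Now set $l_1=1$, $t_1=e_G$, $A_1=S_{1,1}$. Given $l_n,t_n$ with $e_G\in A_n:=S_{l_n,1}t_n$, choose $l_{n+1}>l_n$ so large that $S_{l_{n+1},1}$ is invariant enough for the lemma guaranteeing arbitrarily many tiles of each shape to apply to the $K_{l_{n+1}}S_{l_n,1}$-interior of $S_{l_{n+1},1}$; then the master partition of $S_{l_{n+1},1}$ into level-$l_n$ tiles contains a copy $S_{l_n,1}c$ of the first shape with $c$ lying deep in that interior. Put $t_{n+1}=c^{-1}t_n$ and $A_{n+1}=S_{l_{n+1},1}t_{n+1}$; then $S_{l_n,1}c$ is carried exactly onto $A_n$, so $A_n$ is a master-sub-tile of $A_{n+1}$ and $e_G\in A_n\subset A_{n+1}$.

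\emph{Exhaustion.} The deep-interior choice gives $K_{l_{n+1}}S_{l_n,1}c\subset S_{l_{n+1},1}$, and since $A_n=S_{l_n,1}c\,(c^{-1}t_n)$ this yields $K_{l_{n+1}}A_n\subset S_{l_{n+1},1}(c^{-1}t_n)=A_{n+1}$; as $e_G\in A_n$ we get $A_{n+1}\supset K_{l_{n+1}}$. Because $K_k\nearrow G$, the nested sets $A_n$ satisfy $\bigcup_n A_n=G$, which is condition (1).

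\emph{Rebuilding $\mathcal{T}_n'$.} For fixed $n$ and each $N>n$, let $\mathcal{P}_n^N$ be the master partition of the tile $A_N$ into level-$l_n$ shapes. By the coherence furnished by Remark \ref{uniform}, the restriction of $\mathcal{P}_n^N$ to the sub-tile $A_{N-1}$ is exactly $\mathcal{P}_n^{N-1}$, so these partitions fit together and I may define $\mathcal{T}_n'=\bigcup_{N>n}\mathcal{P}_n^N$, a partition of $\bigcup_N A_N=G$ into level-$l_n$ shapes whose tile containing $e_G$ is $A_n$. Every finite pattern of $\mathcal{T}_n'$ already occurs in $\mathcal{T}_{l_n}$, so $\mathcal{T}_n'\in X_{\mathcal{T}_{l_n}}$ and its shape set lies in $\mathcal{S}(\mathcal{T}_{l_n})$; the lemma on many tiles shows that for $N$ large $\mathcal{P}_n^N$ already realizes every level-$l_n$ shape, giving equality of shape sets (condition (2)) and, through Lemma \ref{irreducible}, irreducibility of $\mathcal{T}_n'$. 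Finally, each level-$l_{n+1}$ tile of $\mathcal{T}_{n+1}'$ is refined coherently by these same master partitions into level-$l_n$ tiles of $\mathcal{T}_n'$, so $(\mathcal{T}_n')$ is primely congruent, which is condition (3).

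\emph{Main obstacle.} The delicate point is the tension between \emph{rigidity} and \emph{spreading}: to keep $(\mathcal{T}_n')$ primely congruent each $A_n$ must sit as a genuine master-sub-tile of $A_{n+1}$ (so merely translating a fixed tiling is too inflexible, as the nested $e_G$-tiles of a single translate need neither be first shapes nor exhaust $G$), while to force $\bigcup_n A_n=G$ one must push $A_{n+1}$ outward in every direction. Reconciling these by selecting the first-shape sub-copy deep enough in the interior that a full $K_{l_{n+1}}$-neighborhood of $A_n$ is swallowed is the crux; the remaining gluing and the verification of conditions (2) and (3) are then routine consequences of Remark \ref{uniform} and Lemma \ref{irreducible}.
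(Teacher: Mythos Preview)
Your argument is correct and follows the same blueprint as the paper's: build a nested flag $A_n=S_{l_n,1}t_n$ of translated first shapes exhausting $G$, with each $A_n$ sitting as a master-sub-tile of $A_{n+1}$, and then recover $\mathcal{T}_n'$ from the master-partition structure; irreducibility and shape equality come (in both proofs) from the fact that the point encoding $\mathcal{T}_n'$ lies in $X_{\mathcal{T}_{l_n}}$, combined with Lemma~\ref{irreducible}. The paper differs only in packaging. For exhaustion it enumerates $G=\{g_1,g_2,\dots\}$ and forces $g_n\in A_{n+1}$, whereas you use $K_k\nearrow G$; these are equivalent devices, and the paper also implicitly needs the shapes to become arbitrarily invariant. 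To produce $\mathcal{T}_n'$ the paper passes through the inverse-limit subshift, taking $\tilde x'$ to be a limit point of the sequence $c_1c_2\cdots c_n\tilde x$ and projecting; your direct gluing of the coherent master partitions $\mathcal{P}_n^N$ is a more explicit realization of the same limit, and as a bonus makes it transparent that no subsequence is actually needed. One small wrinkle worth cleaning up: your inductive choice of $l_{n+1}$ refers to the set $K_{l_{n+1}}$, which depends on $l_{n+1}$ itself. The circularity is harmless---fix the target set first (e.g.\ some $K_m$, or the paper's $\{g_n\}\cup S_{l_n,1}$) and then take $l_{n+1}$ large enough---but the phrasing should be tightened.
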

\begin{proof}
  Without loss of generality, we assume that for each $n$, the tile of $\mathcal{T}_n$ that contains $e_G$ (denoted by $T_n$) is of $S_{n,1}$-shape. Moreover, we can replace $S_{n,1}$ by $T_n$ to make $S_{n,1}$ itself be a tile of $\mathcal{T}_n$. Then the shape $S_{n,1}$
  still contains $e_G$.

  Let $G=\{g_1,g_2,\cdots\}$. We first let $l_1=1$ and $t_1=e_G$. By the irreducibility and congruence of $(\mathcal{T}_n)$,
  there exists $l_1'$ sufficiently large to make each shape of $\mathcal{T}_{l_1'}$ sufficiently invariant such that every tile of
  $\mathcal{T}_{l_1'}$ contains at least one tile of $\mathcal{T}_{l_1}$ with $S_{l_1,1}$ shape. Let $l_2$ sufficiently large such that
  $S_{l_2,1}$ is $(\{g_1\}\cup S_{l_1,1}, \frac{1}{\max_{i}|S_{l_1',i}|})$-invariant. Since $e_G\in S_{l_1,1}$,
  \begin{align*}
    \big\{c\in S_{l_2,1}: (\{g_1\}\cup S_{l_1,1})c\nsubseteq S_{l_2,1}\big\}\subseteq B(S_{l_2,1}, \{g_1\}\cup S_{l_1,1}).
  \end{align*}
  Notice that the $\mathcal{T}_{l_2}$-tile $S_{l_2,1}e_G$ contains at least $\lceil\frac{|S_{l_2,1}|}{\max_{i}|S_{l_1',i}|}\rceil$-many $\mathcal{T}_{l_1}$-tiles
  with $S_{l_1,1}$-shape and $|B(S_{l_2,1}, \{g_1\}\cup S_{l_1,1})|<\frac{|S_{l_2,1}|}{\max_{i}|S_{l_1',i}|}$, there must be $c_1\in C(S_{l_1,1})$ such that
  $(\{g_1\}\cup S_{l_1,1})c_1\subseteq S_{l_2,1}$. Hence we have that
  $$e_G\in S_{l_1,1}\subseteq S_{l_2,1}c_1^{-1}\text{ and }g_1\in S_{l_2,1}c_1^{-1}.$$
  Suppose we have taken $l_1, l_2,\ldots, l_n$ and $c_i\in C(S_{l_i,1})$ for $i=1,2,\ldots,n-1$ such that
  $$e_G\in S_{l_1,1}\subseteq S_{l_2,1}c_1^{-1}\subseteq S_{l_3,1}c_2^{-1}c_1^{-1}\subseteq\cdots\subseteq S_{l_n,1}c_{n-1}^{-1}c_{n-2}^{-1}\cdots c_1^{-1}$$
  and
  $$g_i\in S_{l_{i+1},1}c_{i}^{-1}c_{i-1}^{-1}\cdots c_1^{-1}\text { for each }i=1,2,\ldots,n-1.$$
  We then let $l_n'$ sufficiently large to make each shape of $\mathcal{T}_{l_n'}$ sufficiently invariant such that every
  $\mathcal{T}_{l_n'}$-tile contains at least one $\mathcal{T}_{l_n}$-tile with $S_{l_n,1}$ shape. Let $l_{n+1}$ sufficiently large such that
  $S_{l_{n+1},1}$ is $(\{g_nc_1c_2\cdots c_{n-1}\}\cup S_{l_n,1}, \frac{1}{\max_{i}|S_{l_n',i}|})$-invariant. By a similar discussion,
  we can find $c_n\in C(S_{l_n,1})$ such that $(\{g_nc_1c_2\cdots c_{n-1}\}\cup S_{l_n,1})c_n\subseteq S_{l_{n+1},1}$ and hence
  $$S_{l_n,1}c_{n-1}^{-1}c_{n-2}^{-1}\cdots c_1^{-1}\subseteq S_{l_{n+1},1}c_{n}^{-1}c_{n-1}^{-1}\cdots c_1^{-1}\text{ and }g_n\in S_{l_{n+1},1}c_{n}^{-1}c_{n-1}^{-1}\cdots c_1^{-1}.$$
  Let $t_n=c_{n-1}^{-1}c_{n-2}^{-1}\cdots c_1^{-1}$ for each $n>1$, then $l_n$'s and $t_n$'s satisfy (1).

  To obtain $(\mathcal{T}_n')$, we consider the subshifts $(X_{\mathcal{T}_{l_n}}, G)$'s. For each $n$, let $x_n$ be the associated transitive point for the subshift $(X_{\mathcal{T}_{l_n}}, G)$. For each $n>1$, denote by
  $\pi_{n+1}: (X_{\mathcal{T}_{l_{n+1}}}, G)\rightarrow (X_{\mathcal{T}_{l_n}}, G)$ the associated factor map.
  Let $(\tilde{X},G)$ be the inverse limit system of the systems $(X_{\mathcal{T}_{l_n}},G)$'s and the factor maps $\pi_n$'s.
  Let $\tilde {x}\in \tilde{X}$ be the point such that $\tilde{\pi}_n(\tilde{x})=x_n$ for each $n$, where $\tilde{\pi}_n$ is the projection map from $(\tilde{X},G)$ to $(X_{\mathcal{T}_{l_n}},G)$. Let $\tilde{x}'$ be a limit point of the sequence $(c_1c_2\cdots c_n\tilde{x})$.
  Passing to a subsequence if necessary, we may assume that $\tilde{x}'=\lim_{n\rightarrow \infty}c_1c_2\cdots c_n\tilde{x}$.
  Let $x_n'=\pi_n(\tilde{x}')$. For each $x_n'$, we can get a finite irreducible tiling $\mathcal{T}_n'$ whose shapes are the same as
  $\mathcal{T}_{l_n}$'s. Moreover, the tiling sequence $(\mathcal{T}_n')$ is primely congruent.
\end{proof}

\begin{remark}\label{tiling4}
  We should note that for each $\mathcal{T}_n'$, the center of $S_{l_n,1}$ may not contain $e_G$, i.e. $S_{l_n,1}$ itself may not
  be a tile of $\mathcal{T}_n'$. Since for any $k$,
  $(c_nc_{n+1}\cdots c_{n+k}x_n)_{e_G}=(x_n)_{c_nc_{n+1}\cdots c_{n+k}}=S_{l_n,1}$, it is easy to check that
  $S_{l_n,1}c_1c_2\cdots c_{n-1}\in \mathcal{T}_n'$. We can
  replace the shape $S_{l_n,1}$ into $S_{l_n,1}c_{n-1}^{-1}c_{n-2}^{-1}\cdots c_1^{-1}=S_{l_n,1}t_n$, then $e_G\in C(S_{l_n,1}t_n)$.
\end{remark}

\section{Construction}
\subsection{Construction of $X$}
Let $G=\{g_1,g_2,\cdots\}$ and let $P$ be a polyhedron with $0<\dim P<\infty$.
Let $(\delta_n)$ be a sequence decreasing to zero and  let $(P_{\delta_n})$ be an increasing sequence of finite $\delta_n-$dense subset of $P$.
Denote by $\hat{P}=P\cup\{*,\#\}$. For $F\in F(G)$ and $w=(w_g)_{g\in F}\in \hat{P}^F$,
denote by
$$\rho_{*}(w)=\frac{|\{g\in F: w_g=*\}|}{|F|} \text{ and }\rho_{\#}(w)=\frac{|\{g\in F: w_g=\#\}|}{|F|},$$
the density of $*$ and $\#$ in $w$ respectively.
Denote by $$w(F,*)=\{g\in F: w_g=*\} \text{ and }w(F,\#)=\{g\in F: w_g=\#\},$$the subsets of $F$ on which $w$ takes values $*$ and $\#$ respectively.

Let $0<\rho<1$ and let $(\mathcal{T}_n')$ be a primely congruent sequence of finite irreducible tilings of $G$ as in Theorem \ref{tiling3}.
Denote by $\mathcal{S}_n'=\mathcal{S}(\mathcal{T}_n')=\{S_{n,1}',S_{n,2}',\cdots,S_{n,k'_n}'\}$ the shapes of $\mathcal{T}_n'$. By Theorem \ref{tiling3} and Remark \ref{tiling4}, we assume that $e_G\in C(S_{n,1}')$ for each $n$ and
\begin{align}\label{condition1}
  e_G\in S_{1,1}'\subset S_{2,1}'\subset\cdots \text{ and }\cup_{n\ge1}S_{n,1}'=G.
\end{align}

{\bf Step 1.} Take $\mathcal{T}_1=\mathcal{T}_1'$. Denote by $\mathcal{S}_1=\mathcal{S}(\mathcal{T}_1)=\{S_{1,i}: i=1,2,\ldots,k_1\}$ with $k_1=k_1'$
and $S_{1,i}=S_{1,i}'$ for each $i=1,2,\ldots,k_1$.
For each shape $S_{1,i}, i=1,2,\ldots,k_1$, let $A_i$ be a subset of $S_{1,i}$ with
$$\rho<\frac{|A_i|}{|S_{1,i}|}\le\rho+\frac{1}{|S_{1,i}|}.$$

Define $w_{1,i}\in \hat{P}^{S_{1,i}}$ by
\begin{align*}
  (w_{1,i})_g=\begin{cases}
               &*, \text{ if }g\in A_i,\\
               &\#, \text{ otherwise.}
              \end{cases}
\end{align*}
Then define $w_1\in \hat{P}^G$ by
$$w_1|_{S_{1,i}c}=w_{1,i}, \text{ for any tile }S_{1,i}c\in \mathcal{T}_1.$$

{\bf Step 2.} Take $l_1$ large enough such that any tile of $\mathcal{T}_{l_1}'$ with shape $S'_{l_1,i}$ contains more than $|P_{\delta_1}|^{|w_1(S_{1,1}, *)|}$-
many elements in $\mathcal{T}_1$ with $S_{1,1}$-shape. Take $R_1\subset C(S_{l_1,1}')$ and $h_1\in C(S_{l_1,1}')$ such that
\begin{enumerate}
  \item[1] $e_G\in R_1$,
  \item[2] $|R_1|=|P_{\delta_1}|^{|w_1(S_{1,1}, *)|} \text{ and }S_{1,1}R_1\subset S_{l_1,1}'$,
  \item[3] $h_1\notin R_1 \text{ and }S_{1,1}h_1\subset S_{l_1,1}'$.
\end{enumerate}

Define a word $w_1'\in \hat{P}^{S_{l_1,1}'}$ by modifying $w_1|_{S_{l_1,1}'}$ on $w_1(S_{1,1}R_1,*)$ such that: 
\begin{enumerate}

\item if $g\in S_{l_1,1}'\setminus w_1(S_{1,1}R_1,*)$, then $(w_1')_g=(w_1)_g$, if $g\in w_1(S_{1,1}R_1,*)$, then $(w_1')_g\in P_{\delta_1}$;
\item for $r\in R_n$, $w_1'|_{w_1(S_{1,1},*)r}$'s are mutually different, i.e. $\{w_1'|_{w_1(S_{1,1},*)r}\}_{r\in R_1}=\{P_{\delta_1}^{w_1(S_{1,1},*)}\}.$
\end{enumerate}

In the following we will take $m_1$ large enough and then take $\mathcal{T}_2=\mathcal{T}_{m_1}'$. Denote by
$\mathcal{S}_2=\mathcal{S}(\mathcal{T}_2)=\{S_{2,i}: i=1,2,\ldots,k_2\}$ with $k_2=k_{m_1}'$ and $S_{2,i}=S_{m_1,i}'$ for each $i=1,2,\cdots, k_2$.
The requirements of $\mathcal{T}_2$ is the following:
\begin{enumerate}
  \item $S_{2,1}$ contains $g_1h_1$;
  \item any tile $S_{2,i}c$ of $\mathcal{T}_2$ contains some $S_{l_1,1}'c'\in \mathcal{T}_{l_1}'$, moreover, we assume for $S_{2,i}c$'s, $S_{l_1,1}'c'$'s are
  chosen such that $S_{l_1,1}'c'c^{-1}$'s are the same due to the requirement in Remark \ref{uniform};
  \item $$\frac{|w_1(S_{2,i}c\setminus S_{l_1,1}'c',*)|}{|S_{2,i}c|}>\rho.$$
\end{enumerate}
By condition \eqref{condition1}, (1) can be satisfied when $m_1$ is sufficiently large.
(2) can be satisfied by the irreducibility of $\mathcal{T}_{l_1}'$ whenever $S_{2,i}$'s are sufficiently invariant.
Since any tile $S_{2,i}c$ of $\mathcal{T}_2$ is the disjoint union of some tiles of $\mathcal{T}_1$ and on each tile $T$ of $\mathcal{T}_1$,
$\rho_*(w_1|_{T})>\rho$, when the size of $S_{l_1,1}'c'$ is negligible compared with $S_{2,i}c$, (3) can be satisfied.

For each $S_{2,i}c\in\mathcal{T}_2$, define $v_{1,i}\in \hat{P}^{S_{2,i}c}$ (for different $S_{2,i}c$'s, $v_{1,i}$'s are the same up to translations) such that
\begin{enumerate}
  \item $v_{1,i}|_{S_{l_1,1}'c'}=w_1'$;
  \item on $w_1|_{S_{2,i}c\setminus S_{l_1,1}'c'}$, change the $*$'s into $\#$'s as many as possible such that
  \begin{enumerate}
    \item $\rho_*(v_{1,i})>\rho$ always holds;
    \item for each $S_{1,j}\hat{c}\in \mathcal{T}_1$ with $S_{1,j}\hat{c}\subset S_{2,i}c\setminus S_{l_1,1}'c'$, $$\rho_*(v_{1,i}|_{S_{1,j}\hat{c}})>\rho-\frac{1}{|S_{1,j}|};$$
    \item we first change the $*$'s into $\#$'s for tiles with $S_{1,k_1}$-shape, then tiles with $S_{1,k_1-1}$-shape and so on.
  \end{enumerate}
\end{enumerate}

Define $w_2\in \hat{P}^G$ such that $w_2|_{S_{2,i}c}=v_{1,i}$ for each $S_{2,i}c\in\mathcal{T}_2$.

Suppose that we have finished step $n$ for $n\ge 2$. Using a similar way as step 2, we will obtain $\mathcal{T}_{n+1}$ and $w_{n+1}\in \hat{P}^G$ from $\mathcal{T}_n$ at step $n+1$.

{\bf Step $n+1$.} Take $l_n$ large enough such that any tile of $\mathcal{T}_{l_n}'$ with shape $S'_{l_n,i}$ contains more than $|P_{\delta_n}|^{|w_n(S_{n,1}, *)|}$-
many elements in $\mathcal{T}_n$ with $S_{n,1}$-shape. Take $R_n\subset C(S_{l_n,1}')$ and $h_n\in C(S_{l_n,1}')$ such that
\begin{enumerate}
  \item $e_G\in R_n$,
  \item $|R_n|=|P_{\delta_n}|^{|w_n(S_{n,1}, *)|} \text{ and }S_{n,1}R_n\subset S_{l_n,1}'$,
  \item $h_n\notin R_n \text{ and }S_{n,1}h_n\subset S_{l_n,1}'$.
\end{enumerate}

Similar with step 2, we modify the *'s of $w_n|_{S_{l_n,1}'}$ on $S_{n,1}R_n$ without changing other places to obtain $w_n'\in \hat{P}^{S_{l_n,1}'}$ such that $$\{w_n'|_{w_n(S_{n,1},*)r}\}_{r\in R_n}=\{P_{\delta_n}^{w_n(S_{n,1},*)}\}.$$
We note that $w'_n|_{S_{n,1}h_n}=w_n|_{S_{n,1}}=v_{n-1,1}|_{S_{n,1}}$ and no $*$ occurs in $w'_n|_{S_{n,1}}$.

In the following we will take $m_n$ large enough and then take $\mathcal{T}_{n+1}=\mathcal{T}_{m_n}'$. Denote by
$\mathcal{S}_{n+1}=\mathcal{S}(\mathcal{T}_{n+1})=\{S_{n+1,i}: i=1,2,\ldots,k_{n+1}\}$ with $k_{n+1}=k_{m_n}'$ and $S_{n+1,i}=S_{m_n,i}'$ for each $i=1,2,\cdots, k_{n+1}$.
The requirements of $\mathcal{T}_{n+1}$ is the following:
\begin{enumerate}
  \item $S_{n+1,1}$ contains $g_nh_1h_2\cdots h_n$;
  \item any tile $S_{n+1,i}c$ of $\mathcal{T}_{n+1}$ contains some $S_{l_n,1}'c'\in \mathcal{T}_{l_n}'$, moreover, we assume for $S_{n+1,i}c$'s, $S_{l_n,1}'c'$'s are
  chosen such that $S_{l_n,1}'c'c^{-1}$'s are the same due to the requirement in Remark \ref{uniform};
  \item $$\frac{|w_1(S_{n+1,i}c\setminus S_{l_n,1}'c',*)|}{|S_{n+1,i}c|}>\rho.$$
\end{enumerate}
By condition \eqref{condition1}, (1) can be satisfied when $m_n$ is sufficiently large.
(2) can be satisfied by the irreducibility of $\mathcal{T}_{l_n}'$ whenever $S_{n+1,i}$'s are sufficiently invariant.
Since any tile $S_{n+1,i}c$ of $\mathcal{T}_{n+1}$ is the disjoint union of some tiles of $\mathcal{T}_n$ and on each tile $T$ of $\mathcal{T}_n$,
$\rho_*(w_n|_{T})>\rho$, when the size of $S_{l_n,1}'c'$ is negligible compared with $S_{n+1,i}c$, (3) can be satisfied.

For each $S_{n+1,i}c\in\mathcal{T}_{n+1}$, define $v_{n,i}\in \hat{P}^{S_{n+1,i}c}$ (for different $S_{n+1,i}c$'s, $v_{n,i}$'s are the same up to translations) such that
\begin{enumerate}
  \item $v_{n,i}|_{S_{l_n,1}'c'}=w_n'$;
  \item on $w_n|_{S_{n+1,i}c\setminus S_{l_n,1}'c'}$, change the $*$'s into $\#$'s as many as possible such that
  \begin{enumerate}
    \item $\rho_*(v_{n,i})>\rho$ always holds;
    \item for each $S_{n,j}\hat{c}\in \mathcal{T}_n$ with $S_{n,j}\hat{c}\subset S_{n+1,i}c\setminus S_{l_n,1}'c'$, $$\rho_*(v_{n,i}|_{S_{n,j}\hat{c}})>\rho-\frac{1}{|S_{n,j}|};$$
    \item we first change the $*$'s into $\#$'s for tiles with $S_{n,k_n}$-shape, then tiles with $S_{n,k_n-1}$-shape and so on.
  \end{enumerate}
\end{enumerate}
Hence
\begin{align}\label{vn1}
  v_{n,1}|_{S_{n,1}h_n}=w'_n|_{S_{n,1}h_n}=v_{n-1,1}|_{S_{n,1}h_n}=v_{n-1,1}|_{S_{n,1}}.
\end{align} Moreover,
by (a) of (2), after $v_{n,i}$ is defined, we have that
\begin{align}\label{rho}
  \rho<\rho_*(v_{n,i})\le\rho+\frac{1}{|S_{n+1,i}|}.
\end{align}
Define $w_{n+1}\in \hat{P}^G$ such that $w_{n+1}|_{S_{n+1,i}c}=v_{n,i}$ for each $S_{n+1,i}c\in\mathcal{T}_{n+1}$.

Since $S_{n,1}\subset S_{n+1,1}$, we have that $w_{n+1}|_{S_{n,1}}=v_{n,1}|_{S_{n,1}}=w'_n|_{S_{n,1}}$. Because $w'_n|_{S_{n,1}}$
contains no $*$'s, in the proceeding steps, for $k>1$, the restriction of $w_{n+k}$'s to $S_{n,1}$ is preserved and equal to $w'_n|_{S_{n,1}}$. Noticing that $S_{n,1}\nearrow G$, the limit of the sequence $(w_n)_{n\in\N}$ exists. We then let $w=\lim_{n\rightarrow \infty}w_n$.

Note that for each $g\in G$, $w_g$ cannot be $*$. Replace the $\#$'s of $w$ into some $p\in P$ to obtain $x\in P^G$. Then our subshift is defined by $X=\overline{\cup_{g\in G}gx}$. 

To summarize, we list some properties of the words appeared in our constructions: 
\begin{enumerate}
  \item[1.] $v_{n,i}$ is defined on tiles in $\mathcal{T}_{n+1}$ of $S_{n+1,i}$-shape and the restrictions of $v_{n,i}$ to these tiles are the same (up to translations).
  \item[2.] $w_{n+1}$ is defined on the whole $G$ and $w_{n+1}|_{S_{n+1,i}c}=v_{n,i}$ for each $\mathcal{T}_{n+1}$ tile $S_{n+1,i}c$.
  \item[3.] $w_n'$ is defined on $S_{l_n,1}'$ and $S_{n,1}\subset S_{l_n,1}'\subset S_{n+1,1}$. 
  \item[4.] For each $k>1$, $w_{n+k}|_{S_{n,1}}=w_{n+1}|_{S_{n,1}}=v_{n,1}|_{S_{n,1}}=w'_n|_{S_{n,1}}$ and no * occurs in this word.
  \item[5.] $v_{n,1}|_{S_{n,1}h_n}=w_n'|_{S_{n,1}h_n}=w_n|_{S_{n,1}}=v_{n-1,1}|_{S_{n,1}}$.
\end{enumerate}

\subsection{Lower bound for ${\rm mdim}(X,G)$}
For any $n\in\N$, due to the definition of $v_{n,1}$ in step $n+1$, $S_{n,1}h_n\subset S_{n+1,1}$. Hence we can obtain a sequence of increasing finite subset of $G$:
$$S_{1,1}\subset S_{2,1}h_1^{-1}\subset\ldots\subset S_{n+1,1}h_n^{-1}h_{n-1}^{-1}\cdots h_1^{-1}\subset \ldots.$$ Since $g_nh_1h_2\cdots h_n\in S_{n+1,1}$ for each $n$,
we can deduce that $$\{g_1,g_2,\cdots,g_n\}\subset S_{n+1,1}h_n^{-1}h_{n-1}^{-1}\cdots h_1^{-1}.$$ Hence $$\cup_{n=1}^\infty S_{n+1,1}h_n^{-1}h_{n-1}^{-1}\cdots h_1^{-1}=G.$$
We note that $\{S_{n+1,1}h_n^{-1}h_{n-1}^{-1}\cdots h_1^{-1}\}$ forms a F{\o}lner sequence of $G$.

To give the lower bound for ${\rm mdim} (X,G)$, we first let $$J_n=\{g\in S_{n+1,1}:(v_{n,1})_g=*\}h_n^{-1}h_{n-1}^{-1}\cdots h_1^{-1}.$$
By \eqref{vn1},
\begin{align}\label{vn2}
  \{g\in S_{n,1}: (v_{n-1,1})_g=*\}h_n&=\{g\in S_{n,1}h_n: (v_{n-1,1})_g=*\}\ \nonumber \\
  &\subset \{g\in S_{n+1,1}: (v_{n,1})_g=*\}.
\end{align}
Hence $J_{n-1}\subset J_n$
and we then let $$J=\cup_{n=1}^\infty J_n=\lim_{n\rightarrow \infty}J_n.$$
Since $$\frac{|J\cap S_{n+1,1}h_n^{-1}h_{n-1}^{-1}\cdots h_1^{-1}|}{|S_{n+1,1}h_n^{-1}h_{n-1}^{-1}\cdots h_1^{-1}|}\ge\frac{|J_n|}{|S_{n+1,1}|}=\rho_*(v_{n,1})>\rho,$$
we have that $\delta(J)\ge\rho$.

Now we will find an $x'\in X$ such that for any $z\in P^G$,
$$\pi_{G\setminus J}(z)=\pi_{G\setminus J}(x')\Rightarrow z\in X.$$

$\forall u\in P^J$, for any $t>0$, we define $u_t\in (P\cup \{\#\})^{S_{t+1,1}}$ by the following:
\begin{align*}
  (u_t)_g=\begin{cases}
    (u)_{gh_t^{-1}h_{t-1}^{-1}\cdots h_1^{-1}}, &\text { if }g\in J_th_1h_2\cdots h_t; \\
    (v_{t,1})_g, &\text{ if }g\in S_{t+1,1}\setminus J_th_1h_2\cdots h_t,
  \end{cases}
\end{align*}
i.e. $u_{t,n}|_{v_{t,1}(S_{t+1,1},*)}=u_{n}'|_{v_{t,1}(S_{t+1,1},*)}$ and $u_{t,n}|_{S_{t+1,1}\setminus v_{t,1}(S_{t+1,1},*)}=v_{t,1}|_{S_{t+1,1}\setminus v_{t,1}(S_{t+1,1},*)}$.

We can find $u_n'\in (P_{\delta_n})^J$ for each $n>0$ such that $\lim_{n\rightarrow\infty}(u_n')_g=(u)_g$ for any $g\in J$.
When $n>t$, we define $u_{t,n}\in(P\cup \{\#\})^{S_{t+1,1}}$ such that
\begin{align*}
  (u_{t,n})_g=\begin{cases}
    (u_n')_{gh_t^{-1}h_{t-1}^{-1}\cdots h_1^{-1}}, &\text { if }g\in J_th_1h_2\cdots h_t; \\
    (v_{t,1})_g, &\text{ if }g\in S_{t+1,1}\setminus J_th_1h_2\cdots h_t.
  \end{cases}
\end{align*}
Hence $u_t=\lim_{n\rightarrow\infty}u_{t,n}$. 

We note that $S_{t+1,1}h_{t+1}h_{t+2}\cdots h_{n-1}\subset S_{t+2,1}h_{t+2}h_{t+3}\cdots h_{n-1}\subset \cdots\subset S_{n,1}$.
By \eqref{vn1} again
\begin{align*}
  v_{t,1}|_{S_{t+1,1}}&=v_{t,1}|_{S_{t+1,1}h_{t+1}h_{t+2}\cdots h_{n-1}}\\
  &=v_{t+2,1}|_{S_{t+1,1}h_{t+1}h_{t+2}\cdots h_{n-1}}\\
  &=\cdots\\
  &=v_{n-1,1}|_{S_{t+1,1}h_{t+1}h_{t+2}\cdots h_{n-1}}.
\end{align*}
By \eqref{vn2},
\begin{align*}
  \{g\in S_{t+1,1}: (v_{t,1})_g=*\}h_{t+1}h_{t+2}\cdots h_{n-1}&\subset \{g\in S_{n,1}: (v_{n-1,1})_g=*\}\\
  &=w_n(S_{n,1},*).
\end{align*}
Due to the construction of $w_n'$, any combination of points in $P_{\delta_n}$ can be found in $\{w_n'|_{w_n(S_{n,1},*)r}:r\in R_n\}$. 
Hence any combination of points in $P_{\delta_n}$ can also be found in $\{w_n'|_{w_n(S_{t+1,1}h_{t+1}h_{t+2}\cdots h_{n-1},*)r}:r\in R_n\}$.
Thus we can find some $r_{t,n}\in R_n$ such that
\begin{align*}
  u_{t,n}&=w_n'|_{S_{t+1,1}h_{t+1}h_{t+2}\cdots h_{n-1}r_{t,n}}\\
         &=w_{n+1}|_{S_{t+1,1}h_{t+1}h_{t+2}\cdots h_{n-1}r_{t,n}}\\
         &=w|_{S_{t+1,1}h_{t+1}h_{t+2}\cdots h_{n-1}r_{t,n}}\\
         &=(h_{t+1}h_{t+2}\cdots h_{n-1}r_{t,n}w)|_{S_{t+1,1}}.
\end{align*}
Changing all $\#$'s into $p$ and taking a limit point of the sequence $(h_{t+1}h_{t+2}\cdots h_{n-1}r_{t,n}w)$ as
$n$ goes to infinity, we can find $x_{u_t}\in X$ such that $x_{u_t}|_{S_{t+1,1}}=u_t$. Letting $t\rightarrow\infty$, we then define $x'\in X$
to be a limit point of $(h_1h_2\cdots h_tx_{u_t})$. It follows that $x'|_J=u$ since
$(h_1h_2\cdots h_tx_{u_t})|_{J_t}=x_{u_t}|_{J_th_1h_2\cdots h_t}=u|_{J_t}$.

Now let $z\in P^G$ such that $\pi_{G\setminus J}(z)=\pi_{G\setminus J}(x')$. Let $\theta=z|_J$ and replace $u$ of the above process into $\theta$.
It is easy to show that $z\in X$. Applying Proposition \ref{lowerbound}, ${\rm mdim}(X,G)\ge \rho \dim P$.

\subsection{Upper bound for ${\rm mdim} (X,G)$}

For any $n\in\N$ and $\epsilon>0$, whenever $F_n\in F(G)$ is sufficiently invariant, when we see the restriction of $\mathcal{T}_n$ on $F_n$,
the union of $\mathcal{T}_n$-tiles which are contained in $F_n$ has a portion bigger than $1-\epsilon$. The same thing holds for any translation of $\mathcal{T}_n$.

Due to the patterns of the restrictions $\mathcal{T}_ng|_{F_n}$, $g\in G$,
we can partition $G$ into $L$-many classes $Q_1,Q_2,\ldots,Q_L$ and in each class $Q_j$,
the restrictions $\mathcal{T}_ng|_{F_n}$, $g\in Q_j$, are of the same pattern, i.e. for any $g_1, g_2\in Q_j, \mathcal{T}_ng_1|_{F_n}=\mathcal{T}_ng_2|_{F_n}.$
Here $L$ is the total number of such different patterns, which is finite and depends on $F_n$ and the shapes of $\mathcal{T}_n$.

For any tile $S_{n,i}c\in\mathcal{T}_n$, let us compare $x|_{S_{n,i}c}$ and $v_{n-1,i}$.
Due to the construction of $x$, for any $h\in S_{n,i}c$, $x_h=(v_{n-1,i})_h$ whenever $(v_{n-1,i})_h\neq *$.
Let $1\le j\le L$ be fixed. For $g\in Q_j$, $(g^{-1}x)|_{F_n}$'s coincide on the following set
$$\{h\in S_{n,i}cg\cap F_n: S_{n,i}c\in \mathcal{T}_n
\text{ and } (v_{n-1,i})_{hg^{-1}}\neq *\}.$$
Denoted this set by $C$. Then $$\dim \overline{\{(g^{-1}x)|_{F_n}: g\in Q_j\}}\le |F_n\setminus C|.$$
Noticing that
\begin{align*}
F_n\setminus C&=\{h\in S_{n,i}cg\cap F_n: S_{n,i}c\in \mathcal{T}_n
\text{ and } (v_{n-1,i})_{hg^{-1}}= *\}\\
&\subset\{h\in S_{n,i}cg\subset F_n: S_{n,i}c\in \mathcal{T}_n
\text{ and } (v_{n-1,i})_{hg^{-1}}= *\} \\
&\qquad \cup (F_n\setminus \{h\in S_{n,i}cg\subset F_n: S_{n,i}c\in \mathcal{T}_n\}),
\end{align*}
we have the following estimate:
\begin{align*}
  |F_n\setminus C|&\le \sum_{S_{n,i}cg\subset F_n,S_{n,i}c\in \mathcal{T}_n}\rho_* (v_{n-1,i})|S_{n,i}|+\epsilon|F_n|\\
  &\le (\rho+\frac{1}{\min_{1\le i\le k_n}|S_{n,i}|})|F_n|+\epsilon|F_n|\quad (\text{by \eqref{rho}}).
\end{align*}
Since $$\pi_{F_n}(X)=\overline{\{(g^{-1}x)|_{F_n}: g\in G\}}=\cup_{j=1}^L\overline{\{(g^{-1}x)|_{F_n}: g\in Q_j\}},$$
we have that
$$\dim \pi_{F_n}(X)\le \max_{1\le j\le L}\dim \overline{\{(g^{-1}x)|_{F_n}: g\in Q_j\}}.$$
Thus
$$\frac{\dim \pi_{F_n}(X)}{|F_n|}\le \rho+\frac{1}{\min_{1\le i\le k_n}|S_{n,i}|}+\epsilon.$$
Hence by Proposition \ref{upperbound},
$${\rm mdim} (X,G)\le \liminf_{n\rightarrow\infty}\frac{\dim(\pi_{F_n}(X))}{|F_n|}\le \rho.$$



\subsection{Minimality of $(X,G)$}

To show $(X,G)$ is minimal, it suffices to show that for any $n$, the set $\{g\in G: (gx)|_{S_{n,1}}=x|_{S_{n,1}}\}$ is syndetic.

Noticing that after step $n+1$, in each tile $S_{n+1,1}c\in\mathcal{T}_{n+1}$, $w|_{S_{n,1}c}=w_{n+1}|_{S_{n,1}c}$, we have that
$\{g\in G: (gw)|_{S_{n,1}}=w|_{S_{n,1}}\}\supset C(S_{n+1,1})$.
Since $\mathcal{T}_{n+1}$ is irreducible, by Lemma \ref{irreducible}, $C(S_{n+1,1})$ is syndetic.
Hence $$\{g\in G: (gx)|_{S_{n,1}}=x|_{S_{n,1}}\}\supset \{g\in G: (gw)|_{S_{n,1}}=w|_{S_{n,1}}\}$$ is also syndetic, which shows that $(X,G)$ is minimal.

Finally, we ask the following question:
\begin{question}
Does there exist a minimal subshift of $P^G$ whose mean topological dimension equals $\dim P$?
\end{question}

{\bf Acknowledgements}
The author would like to thank Prof. Hanfeng Li for his guidance and valuable suggestion. This work was done when the author was a visiting scholar at SUNY at Buffalo and he would like to thank CSC and SUNY at Buffalo for the support.
This work was also partially supported by NNSF of China(Grant No. 10901080, 11401220 and 11431012).

\end{document}